\documentclass[11pt]{article}


\textwidth = 450pt
\textheight = 550pt
\hoffset = 0pt
\oddsidemargin = 20pt
\topmargin = 0pt
\headheight = 0pt
\headsep = 0pt


\usepackage{amsmath,amssymb,amsthm,amsfonts,bbm}
\usepackage{latexsym, comment}
\usepackage[square,numbers]{natbib}
\usepackage{tikz, xcolor}
\usepackage{authblk}



\numberwithin{equation}{section}

\allowdisplaybreaks


\newtheorem{theorem}{Theorem}[section]

\newtheorem{prop}[theorem]{Proposition}
\newtheorem{cor}[theorem]{Corollary}
\newtheorem{lem}[theorem]{Lemma}

\theoremstyle{definition}

\newtheorem{remark}[theorem]{Remark}
\newtheorem{rem}[theorem]{Remark}

\theoremstyle{remark}

\newenvironment{romenumerate}[1][0pt]{
\addtolength{\leftmargini}{#1}\begin{enumerate}
 }{\end{enumerate}}

\newcounter{oldenumi}
{\setcounter{oldenumi}{\value{enumi}}
\begin{romenumerate} \setcounter{enumi}{\value{oldenumi}}}
{\end{romenumerate}}

\newcounter{thmenumerate}

\newcounter{romxenumerate}   

\newcounter{xenumerate}   

\newcounter{pfcase}

\newcommand{\refS}[1]{Section~\ref{#1}}





\newcommand\set[1]{\ensuremath{\{#1\}}}

\newcommand\bigpar[1]{\bigl(#1\bigr)}

\def\rompar(#1){\textup(#1\textup)}    

\def\xexp(#1){e^{#1}}

\newcommand\ntoo{\ensuremath{{n\to\infty}}}

\newcommand\punkt[1]{\if.#1\else.\spacefactor1000\fi{#1}}

\newcommand\ie{i.e\punkt}
\newcommand\eg{e.g\punkt}

\newcommand\whp{w.h.p\punkt}

\newcommand{\tend}{\longrightarrow}

\newcommand\pto{\overset{\mathrm{p}}{\tend}}

\newcommand\op{o_{\mathrm p}}
\newcommand\Op{O_{\mathrm p}}

\newcounter{CC}
\newcounter{cc}


\newcommand\E{\operatorname{\mathbb E{}}}
\renewcommand\P{\operatorname{\mathbb P{}}}
\newcommand\Var{\operatorname{Var}}

\newcommand\Po{\operatorname{Po}}

\newcommand\Bin{\operatorname{Bin}}

\newcommand\Be{\operatorname{Be}}

\newcommand\cA{\mathcal A}

\newcommand\cG{\mathcal G}

\newcommand\cL{{\mathcal L}}

\newcommand\cN{\mathcal N}

\newcommand\cQ{\mathcal Q}
\newcommand\cR{{\mathcal R}}

\newcommand\cZ{{\mathcal Z}}

\def\[#1]{[\![#1]\!]}

\renewcommand{\=}{:=}

\newcommand\gnp{\ensuremath{G_{n,p}}}

\newcommand\cao{\cA(0)}
\newcommand\cDA{\Delta\cA}
\newcommand\Ax{A^*}

\newcommand\etto{\bigpar{1+o(1)}}
\newcommand\ettop{\bigpar{1+\op(1)}}

\newcommand\simin{\in}


\newcommand{\1}{{\mathbbm 1}}

\newcommand\REM[1]{{\raggedright\texttt{[#1]}\par\marginal{XXX}}}

\hyphenation{Upp-sala}



\setbox2=\hbox{r}


\title{Majority bootstrap percolation on the random graph $\gnp$}

\author[*]{
Sigurdur  \"Orn Stef\'ansson}
\author[*]{Thomas Vallier }
\affil[*]{University of Iceland}

\date{}
\begin{document}
\maketitle

\begin{abstract}
Majority bootstrap percolation on the random graph $G_{n,p}$
is a process of spread of
``activation''
on a given realisation of the graph with a given number of
initially active nodes. At each step those vertices which have more
active neighbours than inactive neighbours become active as well.

We study the size $A^*$ of the final active set. The parameters of the model
are, besides $n$ (tending to $\infty$), the size $A(0)=A_0(n)$ of
the initially active set and the
probability $p=p(n)$ of the edges in the graph. 
We prove that the process cannot percolate for $A(0) = o(n)$.
We study the process for $A(0) = \theta n$ and every range of $p$ and show that the model
exhibits different behaviours for different ranges of $p$.
For very small $p \ll \frac{1}{n}$, the activation does not spread significantly. For large $p \gg \frac{1}{n}$ then we see a phase transition at $A(0) \simeq \frac{1}{2}n$. In the case $p= \frac{c}{n}$, the activation propagates to a significantly larger part of the graph but (the process does not percolate) a positive part of the graph remains inactive.
\end{abstract}

\section{Introduction}

Majority bootstrap percolation on a graph $G$ is defined as the spread of
\emph{activation} or \emph{infection} according to
the following rule:
We start with a set $\cao\subseteq V(G)$ of
\emph{active} vertices.
Each inactive vertex that has more active neighbours than inactive
becomes active. This is repeated until no more vertices become active.
Active vertices never become inactive, so the set of active vertices
grows monotonically.

We are mainly interested in the final size $|\cA^*| = \Ax$ of the active set on the random graph $\gnp$, and in
particular whether eventually all vertices will be active or not.
If they are, we say that the initial set $\cao$
\emph{percolates} (completely). We will study a sequence of graphs of order
$n\to\infty$; we then also say that (a sequence of)
$\cao$ \emph{almost percolates} if the number of
vertices that remain inactive is $o(n)$, \ie, if $\Ax=n-o(n)$.
In both cases, we talk about supercritical phase. If the activation does not spread to almost all the graph then we talk about subcritical phase.

Recall that $\gnp$ is the random graph on the set of
vertices $V_n = \{ 1,\dots,n\}$ where all possible edges between pairs of
different vertices are present independently and with the same
probability $p$.

The problem of majority bootstrap
percolation where a vertex becomes activated if at least half of its
neighbours are active ($r(v) = \deg(v)/2$) has been studied on the hypercube $\cQ_n = [2]^n$ by
Balogh, Bollob{\'a}s and Morris \cite{BBM}. They consider the case when vertices are set as active at time $0$ independently with a certain probability $q_n$. The main result of \cite{BBM} states that the critical probability is $q_c(\cQ_n) = \frac{1}{2}$. More precisely, they also determine the second order term of the critical probability. If
\begin{equation}
q(n) = \frac{1}{2} - \frac{1}{2} \sqrt{\frac{\log n}{n}} + \frac{\lambda \log \log n}{\sqrt{n \log n}},
\end{equation}
then
\begin{equation}
\P \left\{ \cA^* = \cQ_n\right\} \to 
\begin{cases}
0 \quad \text{if } \lambda \leq -2
\\
1 \quad \text{if } \lambda > \frac{1}{2}.
\end{cases}
\end{equation}
Those results can be compared to our Corollary \ref{cor:p>>1overn} where we prove that for highly connected graphs, the transition happens for $q=1/2$.

The model of global cascade on random networks which generalises majority bootstrap percolation as one requests a proportion $0<\alpha<1$ of the neighbours to be active has been introduced by Watts in \cite{W}. 
The case $\alpha =1/2$ is the majority bootstrap percolation.
The author of \cite{W} derives conclusions using assumptions on the internal structure of the network
 from numerical simulations on randomly generated networks of 1000 nodes.
Our results agree qualitatively as low connectivity limits the propagation of the activation by the lack of connection. We show in Theorem \ref{theo:o1overn} that for $p =o(1/n)$, no propagation is possible w.h.p.
Moreover Watts notices that the propagation is limited by the stability of the nodes in dense graphs. We show in Theorem \ref{theo:p>>1overn} that for $p\gg 1/n$, the critical size for percolation is $A_c=\frac{1}{2} n + o(n)$.

We provide an analytical treatment of the problem of majority bootstrap percolation on the graph $\gnp$. Our results extend to the case of global cascade which we rename as proportional bootstrap percolation with parameter of proportionality $\alpha$.

The authors of \cite{JLTV} studied (the classical) bootstrap percolation on the
Erd\"os--R\'enyi random graph $\gnp$
with an initial set $\cao$ consisting of a given number $A(0)$ of vertices
chosen at random.  
In the classic bootstrap percolation, a vertex becomes active if it has at least $r \geq 2$ incoming activations.

They prove that there is a threshold phenomenon: 

For $p \gg \frac{1}{n}$ then typically, either
the final size
$\Ax$ is small, $\Ax = o_p(n)$ (at most twice the initial size $A(0)$),
or it is large, $\Ax = n- o_p(n)$ (sometimes exactly $n$, but if $p$ is so
small that there are vertices of degree less than $r$, these can never
become active except initially so eventually at most $n-o(n)$
will become infected). 

That result can be related with our Theorem \ref{theo:p>>1overn} to compare classical and majority bootstrap percolation.

In the case of $p= \frac{c}{n}$, the authors of \cite{JLTV} prove that w.h.p. only the activation starting from a significant part of the graph $A(0) = \theta n$, $\theta >0$ spreads to a larger part of the graph but not all the graph, in which case $\Ax = \theta^* n$, $\theta < \theta^*<1$ where $\theta^*$ is exactly and uniquely determined as the smallest root larger than $\theta$ of a given equation.

We prove here, in the case of majority bootstrap percolation, for $p = \frac{c}{n}$ that similarly, the activation spreads to a larger part of the graph so that $A^* = \theta^*n$ with $\theta < \theta^*<x_0 <1$ where $x_0\geq \theta$ is the smallest root of the equation \eqref{eq:root} satisfying \eqref{eq:defx0theo}. See Theorem \ref{theo:covern} in Section \ref{Sresults} 

One may notice that in the case of bootstrap percolation with threshold $r>1$, no vertex of degree $r-1$ can be activated. That immediately eliminates the vertices of degree 1.
Therefore, vertices of degree 1 never become active unless they are set as active at the origin.
Conversely, in the case of majority bootstrap percolation, any vertex of degree 1 that has a link to an active vertex becomes active.

\begin{remark}
An alternative to starting with an initial active set of fixed size $A(0)$
is to let each vertex be initially activated with probability $q=q(n)>0$,
with different vertices activated independently. Note that
this is the same as taking
the initial size $A(0)$ random with $A(0) \simin\Bin(n,q)$.

Therefore, our results can be translated from one case to the other.
\end{remark}
\subsection{Notation}

All unspecified limits are as \ntoo.
We use $\Op$ and $\op$ in the standard sense (see \eg{}
\cite{JLR} and \cite{SJN6}), and we use \whp{} (with high
probability) for events with probability tending to 1 as \ntoo.
Note that, for example,  `$=o(1)$ \whp' is equivalent to `$=o_p(1)$'
and to `$\pto0$'
(see \cite{SJN6}).
We denote $\cN_v $ the neighbourhood of a vertex $v$ and $|\cN_v| = \deg(v)$ its degree.
The notation $f \gg g $ means that $g = o(f)$, for example $p \gg \frac{1}{n}$ is equivalent to $\lim n p = + \infty$ or that there exists a function $\omega(n)$ with $\lim_{n \to \infty} \omega(n) = + \infty$ with $p = \frac{\omega(n)}{n}$ with the implicit condition that $\omega(n) \leq n$ for definiteness of $p \leq 1$.


The method is described in \refS{Ssetup}.
The main results are stated in \refS{Sresults}.
Preliminary results are derived in \refS{Sprob} and \refS{Sbound}.
\refS{Sproof1}--\refS{Sproof3} are dedicated to the proofs.

\section{Reformulation of the process}\label{Ssetup}

We use an algorithm to reveal the vertices activated that resembles the one from \cite{JLTV}.

In order to analyse the bootstrap percolation process on
$\gnp$, we change the time scale; we consider
at each time step
the activations from one vertex only.
Choose $u_1\in\cao$ and give each of its neighbours a \emph{mark};
we then say
that $u_1$ is \emph{used}, and let
$\cZ(1)\=\set{u_1}$ be the set of used vertices
at time 1. At some time $t$, let
$\cDA(t)$ be the set of inactive vertices with the number of
marks larger than half their degree; these now become active and we let
$\cA(t)=\cA(t-1)\cup\cDA(t)$ be the set of
active vertices at time $t$.  Denote by $\cZ(t-1)$ the set of vertices which have been used at time $t-1$.
We continue recursively: At time $t \leq A(t) = |\cA (t)|$, choose a vertex
$u_{t}\in\cA(t)\setminus\cZ(t-1)$.
We give each neighbour of $u_{t}$ a new mark. We keep the unused, active vertices
in a queue and choose $u_{t}$ as the first vertex in the queue.
The vertices in $\cDA(t)$ are added at the end of
the queue in order of their labels. 
Using this setting, the vertices are explored one at a time in the order of their activation or appearance in the set of active vertices.

We finally set
$\cZ(t)=\cZ(t-1)\cup\set{u_{t}}=\set{u_s:s\le t}$,
the set of used vertices. (We start with
$\cZ(0)=\emptyset$.)

The process stops when
$\cA(t)\setminus\cZ(t)=\emptyset$, \ie, when
all active vertices are used. We denote this stopping time by $T$,
\begin{equation}
  \label{t1}
T\=\min\set{t\ge0:\cA(t)\setminus\cZ(t)=\emptyset}.
\end{equation}
Clearly, $T\le n$. In particular, $T$ is finite.
The final active set is $\cA(T)$. It is clear that
this is the same set as the one produced by the bootstrap percolation
process defined in the
introduction, only the time development differs.

Let $A(t)\=|\cA(t)|$, the number of active vertices at time $t$.
Since $|\cZ(t)|=t$ and $\cZ(t) \subseteq \cA(t)$
for $t=0,\dots,T$, we also have
\begin{equation}
  \label{t2}
T=\min\set{t\ge0:A(t)=t}
=
\min\set{t\ge0:A(t)\le t}.
\end{equation}
Moreover,
since the final active set is  $\cA(T)=\cZ(T)$,
its size $\Ax$ is
\begin{equation}\label{at}
\Ax\=A(T)=|\cA(T)|=|\cZ(T)|=T.
\end{equation}
Hence, the set $\cao$ percolates if
and only if $T=n$, and $\cao$ almost percolates if and only if $T=n-o(n)$.
\begin{remark}
In order to find the final set of active vertices, it is not important in which order we explore the vertices.
However, the fact that a vertex $v$ has been activated at a certain time $y$ has 
incidence on its connectivity to the set of inactive vertices $\cR(t) = V \setminus \cA(t)$. 
The condition 
\begin{equation}\label{eq:condideg}
|\cN(v) \cap \cZ(y)| \geq \max\left( |\cN(v) \cap V \setminus \cZ(y)|;1 \right)
\end{equation}
 has to be fulfilled for $v$ to be active at time $y$.
\end{remark}
Let $p_s$ denote the probability that a vertex $i \in V \setminus \cZ(s)$ receives a mark at time $s >A(0)$, 
\begin{equation*}
p_s  = \P \left\{ \left\{ | \cZ(s)| =s \right\} \cap (u_s,i) \right\}
\end{equation*}
where $u_s$ is a way to denote the vertex in $\cA(s)$ which is explored at time $s$ and $|\cZ(s)| = s$ means that the algorithm has not stopped at time $s$. 

We immediately derive the following simple but useful bounds on the probability that a vertex receives an incoming activation from the vertex $u_s$ at time $s$
\begin{equation}\label{eq:key}
p_s \leq p ,
\end{equation}
for any $u_s \notin \cA(0)$.

For $u_s \in \cA(0)$, that is $s \leq A(0)$, the condition $\{| \cZ (s) | = s \}$ is fulfilled and thus we have
\begin{equation}
p_s = p \quad \text{for } s \leq A(0).
\end{equation}
 
Let $\1_i(s)$ be the indicator that $i$ receives a mark at time $s$  i.e. there is an edge between $u_s$ and $i$. We have 
\begin{equation}\label{eq:indics}
\1_i(s) \in \Be(p_s).
\end{equation}
For $s \leq A(0)$ or equivalently $u_s \in \cA(0)$, this is also the indicator that there is an edge between the vertices $u_s$ and $i$. Thus 
\begin{equation}\label{eq:indic0}
\1_i(s) \in \Be(p) \quad \text{for } s \leq A(0).
\end{equation}
and the variables are independent for different $s \leq A(0)$.

Let $M_i(t)$ denote the number of marks $i$ has
at time $t$, then
\begin{equation}
  \label{mi}
M_i(t)=\sum_{s=1}^t \1_i(s),
\end{equation}
at least until the vertex $i$ is activated (and what happens later does not matter).
Note that if $i\notin\cao$, then, for every $t\le T$,
$i \in\cA(t)$ if and only if $M_i(t)\ge \frac{\deg(i)}{2}$.

The sequence of random variables $M_i(t)$ is the number of marks that a vertex receives.
Our focus is to find the number of vertices for which the number of marks is larger than $1/2$ of their degree.
Therefore, being connected to an active vertex that has been explored and being connected to an active vertex that has not yet been explored is very different.
Until a vertex has been explored, its activeness has not been revealed to its neighbours.
That algorithm does not change the final size of the set of active vertices. Indeed, all the vertices will be explored eventually and moreover the fact that $v$ becomes active is a monotonic increasing function of the number of active vertices that have a link with $v$.
\begin{figure}[h]\label{fig:activation}
\begin{center}
\begin{tikzpicture}[scale=1.3, every to/.style={bend right}]


\foreach \nodes in {0.5,1,...,3}
\node at (\nodes,0) [circle, draw=black]{};

\foreach \nodes in {0.5,1,...,3}
\draw (\nodes,0) circle(2pt);

\foreach \nodes in {0.5,1,...,5}
\shade[ball color=black](\nodes,0) circle(2pt);

\shade[ball color=gray, opacity=0.6](6,1) circle(2pt);
\shade[ball color=gray, opacity=0.6](7,0.5) circle(2pt);
\shade[ball color=gray, opacity=0.6](7.5,0) circle(2pt);
\shade[ball color=gray, opacity=0.6](6.8,-0.4) circle(2pt);
\shade[ball color=gray, opacity=0.6](6.2,-0.2) circle(2pt);
\shade[ball color=gray, opacity=0.6](6.5,0.4) circle(2pt);
\shade[ball color=gray, opacity=0.6](6.7,0.9) circle(2pt);
\shade[ball color=gray, opacity=0.6](7.4,0.6) circle(2pt);
\draw (3,-0.1) node[below] {\footnotesize{$t$}};
\draw (5.1,-0.1) node[below] {\footnotesize{$A(t)$}};
\draw (4,-0.1) node[below] {\footnotesize{$i$}};
\draw (6.5,-0.1) node[below] {\footnotesize{$l$}};

\draw[dashed] (6.2,-0.2) to (7,0.5) ;
\draw[-](6.2,-0.2)to(1.5,0);
\draw[dashed](6.2,-0.2)to(4,0);
\shade[ball color=gray, opacity=0.6](1,-1) circle(2pt);
\draw (1.1,-1) node[right] {\footnotesize{: inactive vertices}};
\shade[ball color=black](1,-1.5) circle(2pt);
\draw (1.1,-1.5) node[right] {\footnotesize{: activated vertices}};
\shade[ball color=black](1,-2) circle(2pt);
\node at (1,-2) [circle, draw=black]{};
\draw (1.1,-2) node[right] {\footnotesize{: activated and explored vertices}};

\draw[-](7,-1) to (6,-1);
\draw (7.05,-1) node[right] {\footnotesize{: excitatory links}};
\draw[dashed] (7,-2) to (6,-2);
\draw (7.05,-2) node[right] {\footnotesize{: inhibitory links}};

\end{tikzpicture}
\end{center}
\caption{At time $t$, the vertex $l$ is not yet activated.}\label{graph:gnpact}
\end{figure}
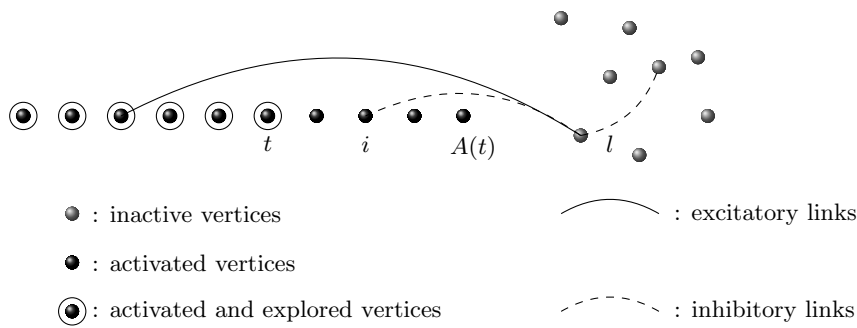

%
%
Define also, for $i\in V_n\setminus\cao$,
\begin{equation}
  \label{yi}
Y_i\= \min \{t: M_i(t) \ge  \frac{1}{2} \deg(i) \cap M_i(t) >0\}.
\end{equation}
If $Y_i\le T$, then $Y_i$ is the time vertex $i$ becomes active,
but if $Y_i>T$, then $i$ never becomes active. Thus, for $t\le T$,
\begin{equation}
  \label{at2}
\cA(t)=\cao\cup\set{i\notin\cao:Y_i\le t}.
\end{equation}
Denote $I_i(t) =  {\mathbbm 1}_{\{Y_i\le t\}} $, the indicator function that the vertex $i$ is active at time $t$
and let
\begin{equation*}\label{eq:pi(t)}
\pi(t) = \P \left\{ I_i(t) = 1 \right\}.
\end{equation*}
The probability $\pi(t)$ is independent of $i$.
We let, for $t=0,1,2,\dots$,
\begin{equation}\label{st}
  S(t)\=|\set{i\notin\cao:Y_i\le t}|
= \sum_{i\notin\cao} {\mathbbm 1}_{\{Y_i\le t\} }  = \sum_{i\notin\cao} I_i(t),
\end{equation}
so, by \eqref{at2} and our notation,
\begin{equation}\label{as}
  A(t)=A(0)+S(t).
\end{equation}
By the relations \eqref{t2}, \eqref{at} and \eqref{as} it suffices to study the process $S(t)$.
$S(t)$ is a sum of identically distributed processes $ I_i(t) \in \Be \left(\pi(t)\right)$.
The main problem is that we do not have independence of the random variables $I_i$, $i=1,...,n-A(0)$.
Take any two vertices $i$ and $j$. The probability that the vertex $i$ is activated depends on its degree and therefore on having or not a connection to $j$. The activation of the vertex $i$ therefore gives an indication on the existence or not of an edge $(i,j)$. Thus this gives indications whether the vertex $j$ is active.


That implies that the random variable $S(t) = \sum_{i\notin\cao} I_i(t)$ is not a sum of independent Bernoulli random variable and hence is not a binomial.

Though the random variables $I_i (t)= {\mathbbm 1}_{\{Y_i \leq t\}}$ are not independent, they are very close to being independent since the dependency between two random variables $I_i(t)$ and $I_j(t)$ is only through the possible connection $\{i,j\}$.

Let $R(t) = n-A(t)$ denote the number of inactive vertices. 
It is equivalent to study $R(t)$ which is also a sum of identically distributed Bernoulli random variables
\begin{equation}\label{eq:defrt}
R(t) = \sum_{i=1}^{n-A(0)} 1- I_i(t) = \sum_{i=1}^{n-A(0)} K_i(t),
\end{equation}
where $K_i(t) \in \Be \left(1-\pi(t) \right)$. We shall denote $\delta(t) = 1 - \pi(t)$ so that $K_i(t) \in \Be \left( \delta(t) \right)$.

The proofs of the supercritical case rely on proving that $R(t) =o_p(n)$.
\section{Results}\label{Sresults}
We give the results depending on the value of $p$.

When $p = o \left( \frac{1}{n} \right)$ then there are too few connections for the activation to spread 
\begin{theorem}\label{theo:o1overn}
If $p = o \left(\frac{1}{n}\right)$, then for any $\varepsilon >0$, we have
\begin{equation}\label{eq:o1overn}
\lim_{n \to \infty} \P \left\{ A^* > (1+ \varepsilon) A(0)\right\} = 0,
\end{equation}
that is
\begin{equation*}
A^* = A(0) \ettop.
\end{equation*}
\end{theorem}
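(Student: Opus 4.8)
The plan is to show that when $p = o(1/n)$, the only vertices that can ever be activated (beyond the initial set) are those lying in small components attached to $\cA(0)$, and that these contribute only $o(A(0))$ — in fact $o(n)$ — new active vertices w.h.p. The key structural observation is the one emphasized in the introduction: in \emph{majority} bootstrap percolation a vertex $v\notin\cA(0)$ of degree $d$ becomes active only if at some point more than half of its neighbours are active, and in particular a vertex of degree $1$ becomes active as soon as its unique neighbour is active. So I would proceed by bounding the total number of vertices that can ever be reached from $\cA(0)$ by any path along which the majority condition could conceivably be satisfied.

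First I would use the algorithmic reformulation of \refS{Ssetup}. Recall $A(t) = A(0) + S(t)$ with $S(t) = \sum_{i\notin\cao} I_i(t)$ and $I_i(t) = \1_{\{Y_i\le t\}}$, and that by \eqref{eq:key} each mark arrives with probability $p_s \le p$. The cleanest route is a first-moment / domination argument: the number $S^* := S(T) = A^* - A(0)$ of newly activated vertices is at most the number of vertices $i\notin\cA(0)$ for which $M_i(T) \ge 1$, i.e. which receive at least one mark during the whole process. Since at most $n$ vertices are ever used (each contributing marks to a $\Be(p_s)\subseteq\Be(p)$ set of targets, by \eqref{eq:indics}–\eqref{eq:key}), the expected number of (vertex, incoming-mark) incidences into $V_n\setminus\cao$ is at most $n\cdot n \cdot p = n^2 p = o(n)$ by hypothesis. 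Hence $\E[S^*]\le \E[\#\{i\notin\cao: M_i(T)\ge1\}] \le n^2 p = o(n)$, and by Markov's inequality $S^* = o_p(n)$; since $A(0) = \theta n$ (or, in the stated generality, we only need $A(0)\ge 1$ and the conclusion $A^*=(1+\op(1))A(0)$ when $A(0)$ is of order $n$, while for smaller $A(0)$ one still gets $A^* = A(0)+o_p(n)$), this gives $A^* \le (1+\eps)A(0)$ w.h.p. — actually we should be slightly careful: to get the multiplicative statement $A^* \le (1+\eps)A(0)$ we want $S^* \le \eps A(0)$, which follows from $S^* = o_p(n)$ precisely when $A(0)$ is of order $n$; the theorem as stated is under the standing assumption $A(0) = \theta n$, so $n^2 p = o(n) = o(A(0))$ suffices.

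The main obstacle is making the domination in the previous paragraph fully rigorous despite the dependence between the evolving active set and the graph: the set of used vertices $\cZ(T)$ is itself random and determined by the edges revealed so far, so one cannot simply say "$n$ used vertices each send $\Bin(n,p)$ marks independently." The fix is to bound things \emph{before} conditioning: reveal the graph first, and note that $\{i\notin\cao : M_i(T)\ge 1\}$ is contained in $\{i\notin\cao : i \text{ has a neighbour in } \cA(0)\}\cup(\text{further rounds})$; more robustly, every vertex ever activated lies in the union of the components of $\gnp - \cao$-incident structure, so $S^* \le \#\{\text{edges between } \cao \text{ and } V_n\setminus\cao\} + \#\{\text{edges inside } V_n\setminus\cao\}$ counted with multiplicity along activation paths — and the total number of edges incident to any fixed vertex set, and indeed $|E(\gnp)| \sim \binom n2 p = O(n^2 p) = o(n)$, dominates $S^*$. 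So $\E[|E(\gnp)|] = \binom{n}{2}p = o(n)$ and Markov gives $|E(\gnp)| = o_p(n)$, hence $S^* \le |E(\gnp)| = o_p(n) = o_p(A(0))$, which is exactly \eqref{eq:o1overn}. I would present this edge-counting bound as the primary argument since it sidesteps the conditioning issue entirely: no vertex outside $\cao$ can be activated without being an endpoint of an edge of the graph, and each such edge can "deliver" activation, so $A^* - A(0) \le$ (number of non-isolated vertices outside $\cao$) $\le 2|E(\gnp)|$.
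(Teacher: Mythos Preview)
Your edge-counting argument ($A^* - A(0) \le 2|E(\gnp)|$, $\E|E(\gnp)| = \binom n2 p = o(n)$, Markov) is correct and entirely sidesteps the algorithmic machinery of \refS{Ssetup}, but it only yields the additive bound $A^* = A(0) + o_p(n)$. This gives the multiplicative conclusion $A^* = (1+o_p(1))A(0)$ only when $A(0) = \Theta(n)$; your assertion that the theorem carries a standing assumption $A(0) = \theta n$ is not supported by the statement, and the paper's proof explicitly treats ``any $A(0)$'' (with $A(0) = O(1)$ handled separately).

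The paper's route obtains the stronger result by bounding $\pi^+(t) \le \P\{\Bin(t,p)\ge 1\} = tp\bigl(1+o(1)\bigr)$, whence $\E S^+(t) \le n\,\pi^+(t) = o(t)$ because $np = o(1)$; Markov at $t = (1+\eps)A(0)$ then gives $S^+\bigl((1+\eps)A(0)\bigr) \le \eps A(0)$ w.h.p., and the stopping-time criterion \eqref{t2} forces $T \le (1+\eps)A(0)$ regardless of the scale of $A(0)$. The decisive difference is $o(t)$ versus $o(n)$. Your ``first attempt'' was in fact headed exactly here: the conditioning worry you raised is precisely what \refL{lem:stocdomin} and Corollary~\ref{cor:upperboundst} dissolve, yielding $\P\{M_i(t)\ge 1\} \le \P\{\Bin(t,p)\ge 1\} \le tp$ with no dependence on the random set $\cZ(t)$. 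Had you pushed that line through rather than retreating to the global edge count, you would have recovered the full statement for all $A(0)$.
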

In the case when $p = \frac{c}{n}$ and if $\cA(0)$ contains a positive part of the graph, then the activation spreads to a larger part of the graph but does not completely percolate.
\begin{theorem}\label{theo:covern}
If $p = \frac{c}{n}$ for some $0<c<\infty$, we have
\begin{romenumerate}
\item \label{theo:coverni}
If $A(0) = o(n)$, let $g(c)= (1+c) c e^{-c}$ then 
\begin{equation}\label{eq:coverni}
A^* = o_p(n),
\end{equation}
more precisely, we have for $A(0) \to \infty$ as $n \to \infty$
\begin{equation}\label{eq:coverniprecise}
A^* \leq \frac{1}{1-g(c)} A(0) \ettop.
\end{equation}
%
\item \label{theo:covernii}
If $A(0) = \theta n$, for some $0< \theta<1$, then we have
\begin{equation}\label{eq:covernii}
A^* = \theta^* n + o_p(n),
\end{equation}
with $\theta < \theta^*\leq x_0 <1$ where 
\begin{equation}\label{eq:defx0theo}
x_0 = \inf \{x \geq \theta,  f_{c,\theta}(x)< 0\},
\end{equation}
with
\begin{equation}\label{eq:root}
f_{c,\theta}(x) = \theta  - x +  \left( 1 - \theta \right) e^p e^{-c} \sum_{k=1}^{\lfloor xn \rfloor} \frac{(cx)^k}{k!} \sum_{j=0}^k \frac{\left((1-x)c\right)^j}{j!}.
\end{equation}
\end{romenumerate}
\end{theorem}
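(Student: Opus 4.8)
\noindent\emph{Proof strategy.} The plan is to reduce the whole statement to a uniform law of large numbers for the non-decreasing counting process $S(t)=\sum_{i\notin\cao}I_i(t)$. By \eqref{t2}, \eqref{at} and \eqref{as} one has $A^*=T=\min\{t\ge0:A(0)+S(t)\le t\}$ with $A(t)=A(0)+S(t)$ non-decreasing in $t$, so $n^{-1}A^*$ is exactly the first time the rescaled curve $x\mapsto n^{-1}A(\lfloor xn\rfloor)$ meets the diagonal $y=x$. Everything therefore comes down to (a) sharp two-sided estimates for $\E S(\lfloor xn\rfloor)=(n-A(0))\,\pi(\lfloor xn\rfloor)$, and (b) concentration of $S(\lfloor xn\rfloor)$ about its mean, uniformly in $x$.

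For the upper bounds, fix $i\notin\cao$ and $t=\lfloor xn\rfloor$. Using \eqref{eq:key} and \eqref{mi} one couples so that the number of marks satisfies $M_i(t)\preceq\Bin(t,p)$, while the remaining part of $\deg(i)$ — the edges from $i$ to the $n-1-t$ not-yet-used vertices — behaves like an essentially independent $\Bin(n-1-t,p)$. Writing the activation event of \eqref{yi} as ``$M_i(t)\ge\deg(i)-M_i(t)$ and $M_i(t)\ge1$'', which is increasing in $M_i(t)$, and passing to the Poisson limits $\Po(cx)$ and $\Po((1-x)c)$ afforded by $p=c/n$, one gets
\begin{equation*}
\pi(t)\;\le\;(1+o(1))\,e^{p}e^{-c}\sum_{k=1}^{\lfloor xn\rfloor}\frac{(cx)^{k}}{k!}\sum_{j=0}^{k}\frac{((1-x)c)^{j}}{j!}\;=:\;(1+o(1))\,\delta_0(x),
\end{equation*}
the factor $e^{p}\to1$ absorbing the finite-$n$ corrections $(1-p)^{-O(1)}$ and the truncation being exact because $M_i(t)\le t$. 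Hence $\E S(t)\le(1+o(1))(n-A(0))\,\delta_0(x)$. For concentration, since $I_i(t)$ and $I_j(t)$ depend on each other only through the single possible edge $\{i,j\}$ one has $\Cov(I_i(t),I_j(t))=O(p)$, so $\Var S(t)=O(n)$; Chebyshev together with a grid argument (using that $S$ is non-decreasing and $|\pi(t+1)-\pi(t)|\le p$) then gives $S(t)\le\E S(t)+o_p(n)$ uniformly in $t$. With $A(0)=\theta n$ this yields $n^{-1}A(\lfloor xn\rfloor)\le\theta+(1-\theta)\delta_0(x)+o_p(1)=x+f_{c,\theta}(x)+o_p(1)$, so whenever $x>x_0$ (where $f_{c,\theta}(x)<0$) we have $A(\lfloor xn\rfloor)<\lfloor xn\rfloor$ \whp{}; thus $A^*\le x_0n+o_p(n)$, \ie\ $\theta^*\le x_0$, and $x_0<1$ because $f_{c,\theta}(1)=-(1-\theta)e^{-c}+o(1)<0$. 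For part~\ref{theo:coverni} take $\theta=0$: as $x\to0$, $\delta_0(x)=g(c)\,x+O(x^{2})$ with $g(c)=(1+c)ce^{-c}$, and since $\sup_{c>0}g(c)<1$ there is a fixed small $\eps_0$ with $\E A(t)\le A(0)+g(c)\,t\,(1+o(1))<t$ at $t=\eps_0 n$; Chebyshev then forces $A^*=o_p(n)$, and when in addition $A(0)\to\infty$ the sharper bound $\Var S(t)=O(A(0))$ on the range $t\le CA(0)$ (the covariance term being only $O(t^{2}/n)$ since $\pi(t)=O(t/n)$ there) upgrades this to $A^*=\tfrac{1}{1-g(c)}A(0)\,(1+o_p(1))$.

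For the lower bound in part~\ref{theo:covernii}, the crucial point is that for $s\le A(0)$ the inequality \eqref{eq:key} is an equality: the used vertices are then exactly those of $\cao$, so $\1_i(1),\dots,\1_i(A(0))$ are \iid\ $\Be(p)$ and independent of the rest of $\deg(i)$. Hence at $t=A(0)=\theta n$ the computation above is tight, $\pi(\theta n)\to\delta_0(\theta)>0$, so $n^{-1}A(\theta n)\pto\theta+(1-\theta)\delta_0(\theta)=:x_1>\theta$. By monotonicity of $A$, $A(t)\ge A(\theta n)=x_1n+o_p(n)$ for all $t\ge\theta n$, so $A(t)>t$ as long as $t<x_1n$; therefore $T\ge x_1n-o_p(n)$ and $\theta^*\ge x_1>\theta$.

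What remains — and this is the main obstacle — is to turn the sandwich $x_1n\le A^*\le x_0n$ into the precise $A^*=\theta^*n+o_p(n)$. For this one must identify the limiting trajectory $\phi(x)=\lim_n n^{-1}A(\lfloor xn\rfloor)$ on $[\theta,x_0]$ and prove $n^{-1}A(\lfloor xn\rfloor)\pto\phi(x)$; then $\theta^*=\inf\{x\ge\theta:\phi(x)\le x\}$, so $\theta<x_1\le\theta^*\le x_0<1$, and $T/n\pto\theta^*$ follows from monotonicity of $A$ and of $\phi$ together with $\phi(x)>x$ on $[\theta,\theta^*)$. The difficulty is that for $x>\theta$ the marking probabilities $p_s$ with $A(0)<s\le xn$ are strictly below $p$ and depend on the past (an activated vertex is biased towards having few edges into the inactive set), so $\pi(\lfloor xn\rfloor)$ lies strictly between the two crude bounds; to compute it one must track, along the exploration of \refS{Ssetup}, the empirical joint law of $\big(\deg(i),M_i(t)\big)$ over the still-inactive vertices and show it concentrates around the solution of a deterministic recursion — exactly in the spirit of the $r$-neighbour analysis of \cite{JLTV}. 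The weak-dependence structure already observed, that any two indicators $I_i(t)$, $I_j(t)$ interact only through a single potential edge, is what makes both this concentration and the variance estimates above go through.
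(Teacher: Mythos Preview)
Your overall strategy --- bound $\pi(t)$ above by the Poisson expression $\delta_0(x)$, establish concentration of $S(t)$ about its mean, then locate the first crossing of the diagonal --- matches the paper's. Two points deserve comment.

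\emph{Concentration.} You argue via $\Cov(I_i(t),I_j(t))=O(p)$ on the grounds that the two indicators interact only through the edge $\{i,j\}$. This is rigorous for $t\le A(0)$, since there $I_i(t)$ is a function of the edges incident to $i$ alone; but for $t>A(0)$ the mark count $M_i(t)$ depends on the identities of $u_{A(0)+1},\dots,u_t$, which are determined by the whole exploration, so the claimed decoupling is not obvious. The paper states this ``dependence only through $\{i,j\}$'' only as a heuristic in \refS{Ssetup} and never uses it. Instead it proves that the complements $K_i=1-I_i$ are positively associated (\refL{lem:larger}), deduces that $S(t)$ is stochastically dominated by a genuine binomial $S^+(t)\in\Bin\bigl(n-A(0),\pi^+(t)\bigr)$ (Corollary~\ref{cor:upperboundst}), and then gets uniform concentration of $S^+$ for free, via Glivenko--Cantelli in part~\ref{theo:covernii} and via the trivial bound $\Var S^+(t)\le\E S^+(t)\le g(c)t(1+\epsilon)$ plus Chebyshev at $t=\frac{1+\epsilon}{1-g(c)(1+\epsilon)}A(0)$ in part~\ref{theo:coverni}. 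This route is cleaner and avoids your covariance computation entirely; you could simply replace your Chebyshev step for the upper bound by an appeal to $S^+$.

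\emph{The convergence in part~\ref{theo:covernii}.} You are right that the sandwich $x_1n\le A^*\le x_0n$ (with $x_1=\theta+(1-\theta)\delta_0(\theta)$) does not by itself yield $A^*/n\pto\theta^*$ for a deterministic $\theta^*$, and you correctly identify the obstacle as the unknown $p_s$ for $s>A(0)$. The paper does not fill this gap either: Propositions~\ref{prop:existence} and~\ref{prop:spread} prove exactly the sandwich $\theta<\theta^*\le x_0<1$, and the Conclusion explicitly says that the exact value of $\theta^*$ is not determined. On this point your write-up is in fact more candid than the paper's.
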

\begin{remark}
Even though $x_0$ depends on $n$, it has a limit strictly less than $1$ as $n \to \infty$.
\end{remark}
\begin{remark}
Notice in the case of Theorem \ref{theo:covern} \ref{theo:coverni} that 
\begin{equation*}
\lim_{c \to 0} g(c) = 0  \text{ and } \lim_{c \to \infty} g(c) = 0.
\end{equation*}
These limits are consistent with the results of Theorems \ref{theo:o1overn} and \ref{theo:p>>1overn} \ref{theo:p>>1overni}. 
One should remark also that even though $A(0) =o(n)$, the vertices of degree $1$ and $2$ may contribute to enlarge the set of activated vertices. The vertices of higher degree tend to be more stable as is seen in the following theorem.
\end{remark}

If one increases the connectivity such that $p \gg \frac{1}{n}$ then the high number of connections tends to stabilise the process such that the threshold for majority bootstrap percolation is at $A(0) = \frac{1}{2}n$.
\begin{theorem}\label{theo:p>>1overn}
If $\frac{1}{n} \ll p \leq 1$ then
\begin{romenumerate}
\item \label{theo:p>>1overni}
If $A(0) = o(n)$ is monotonically increasing in $n$ then
\begin{equation}
A^* = o_p(n)
\end{equation}
More precisely
\begin{equation}\label{eq:p>>1overni}
\begin{cases}
 A^* = A(0) \ettop  & if A(0) \gg n \exp\left( -\frac{1}{3} np\right)
\\
 A^* = O_p \left( n \exp\left( -\frac{1}{3} np\right) \right)  & \text{if } A(0) \leq K n \exp\left(-\frac{1}{3} np\right) \text{ for some } K>0. 
\end{cases}
\end{equation}
\item  \label{theo:p>>1overnii}
If $A(0) = \theta n$, $0 < \theta < \frac{1}{2}$ then  
\begin{equation}\label{eq:p>>1overnii}
A^* = A(0) \ettop.
\end{equation}
\item  \label{theo:p>>1overniii}
If 
\begin{equation}
\lim_{n \to \infty} \frac{A(0) -\frac{1}{2}n}{\sqrt{\frac{n}{p}}} = + \infty
\end{equation}
then 
\begin{equation}\label{eq:p>>1overnii2}
A^* = n - o_p(n).
\end{equation}
\end{romenumerate}
\end{theorem}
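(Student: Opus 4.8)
The plan is to reduce every part of the theorem to one one-vertex estimate — an upper bound on the activation probability $\pi(t)=\P\{I_i(t)=1\}$ for Parts \ref{theo:p>>1overni} and \ref{theo:p>>1overnii}, and an upper bound on $\gd(A(0))=1-\pi(A(0))$ for Part \ref{theo:p>>1overniii} — and then to read off the behaviour of $T=\Ax$ using only Markov's inequality together with the deterministic facts already recorded in \refS{Ssetup}: $S(t)$ is non-decreasing, $R(t)$ is non-increasing, and $T=\min\{t\ge0:A(0)+S(t)\le t\}$. In this way the dependence among the indicators $I_i(t)$ never has to be controlled: for the upper bounds on $\Ax$ (Parts \ref{theo:p>>1overni}, \ref{theo:p>>1overnii}) only $\E[S(t)]=(n-A(0))\pi(t)$ enters, and for the supercritical lower bound (Part \ref{theo:p>>1overniii}) only $\E[R(t)]=(n-A(0))\gd(t)$.

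\emph{Parts \ref{theo:p>>1overni} and \ref{theo:p>>1overnii}.} Fix a level $a=a(n)$ with $tp<a<\tfrac12 np$, and use the deterministic inclusion $\{M_i(t)\ge\tfrac12\deg(i)\}\subseteq\{M_i(t)\ge a\}\cup\{\deg(i)<2a\}$ together with the stochastic domination $M_i(t)\preceq\Bin(t,p)$ (from $p_s\le p$; see \eqref{eq:key}) and $\deg(i)\sim\Bin(n-1,p)$ to obtain the marginal bound
\begin{equation}\label{eq:pibound}
\pi(t)\le\P\{\Bin(t,p)\ge a\}+\P\{\Bin(n-1,p)<2a\}.
\end{equation}
For Part \ref{theo:p>>1overnii}, with $A(0)=\gth n$ and $\gth<\tfrac12$, I would fix $\gd=\tfrac14(1-2\gth)>0$ and take $a=(\tfrac12-\tfrac\gd2)np$; then for every $t\le(\tfrac12-\gd)n$ both terms of \eqref{eq:pibound} are at most $e^{-c\gd^2np}$ by Chernoff's inequality, hence $o(1)$ since $np\to\infty$, so $\E[S((\tfrac12-\gd)n)]=o(n)$. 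If $T>A(0)+\eta n$ for a fixed $\eta\in(0,\gd)$, then $S(A(0)+\eta n)>\eta n$ (note $A(0)+\eta n\le(\tfrac12-\gd)n$ here), an event of probability $\le\E[S(A(0)+\eta n)]/(\eta n)\to0$; letting $\eta\downarrow0$ (and using monotonicity of $S$ for the remaining $\eta$) gives $T=A(0)+\op(n)$, i.e.\ $\Ax=A(0)\ettop$. For Part \ref{theo:p>>1overni} I would instead pick small fixed constants $\lambda$ and $\eps_0$, with $a=\tfrac12\lambda np$, so that (both by Chernoff, for $n$ large) $\P\{\Bin(n-1,p)<\lambda np\}\le e^{-np/3}$ and $\P\{\Bin(\eps_0 n,p)\ge a\}\le e^{-np/3}$; then $\pi(t)\le 2e^{-np/3}$ for all $t\le\eps_0 n$. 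A Markov bound at $t=\eps n$ (any fixed $\eps\le\eps_0$) shows $A(0)+S(\eps n)\le\eps n$ \whp\ (using $A(0)=o(n)$), hence $T\le\eps n$ and so $\Ax=\op(n)$. Refining within $t\le\eps_0 n$: Markov at $t^*=\lceil(1+\eta)A(0)\rceil$ gives $T\le t^*$ \whp\ when $A(0)\gg ne^{-np/3}$, hence $\Ax=A(0)\ettop$; Markov at $t^*=\lceil(K+\lambda')ne^{-np/3}\rceil$ gives $\P\{T>t^*\}\le 2/\lambda'$ when $A(0)\le Kne^{-np/3}$, hence $\Ax=\Op(ne^{-np/3})$.

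\emph{Part \ref{theo:p>>1overniii}.} As indicated at the end of \refS{Ssetup}, it suffices to show $R(A(0))=\op(n)$. The key point is that up to time $A(0)$ the marks are genuinely independent: by \eqref{eq:indic0}, the $A(0)$ vertices used by time $A(0)$ are exactly $\cao$, so $M_i(A(0))=|\cN_i\cap\cao|\eqd X:=\Bin(A(0),p)$ while $\deg(i)-M_i(A(0))\eqd Y:=\Bin(n-A(0)-1,p)$, with $X$ and $Y$ independent (they count the neighbours of $i$ inside, respectively outside, $\cao$). Since $M_i(A(0))\ge\tfrac12\deg(i)\iff X\ge Y$, a vertex $i\notin\cao$ is active at time $A(0)$ iff $X\ge Y$ and $X\ge1$, so $\gd(A(0))\le\P\{X<Y\}+(1-p)^{A(0)}$. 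Here $(1-p)^{A(0)}\le e^{-A(0)p}\to0$ since $A(0)p>\tfrac12np\to\infty$, and by Chebyshev's inequality
\begin{equation}\label{eq:cheb}
\P\{X<Y\}\le\frac{\Var(X-Y)}{\bigpar{\E(X-Y)}^2}\le\frac{np}{\bigpar{(2A(0)-n)p}^2}=\frac14\biggpar{\frac{\sqrt{n/p}}{A(0)-n/2}}^2\longrightarrow0
\end{equation}
by hypothesis. Hence $\gd(A(0))\to0$, so $\E[R(A(0))]=(n-A(0))\gd(A(0))=o(n)$, and $R(A(0))=\op(n)$ by Markov. Finally $R$ is non-increasing, so $R(t)\le R(A(0))$ for $t\ge A(0)$, while $A(t)\ge A(0)>t$ for $t<A(0)$; hence $A(t)>t$ for every $t<n-R(A(0))$, so $T\ge n-R(A(0))=n-\op(n)$, i.e.\ $\Ax=n-\op(n)$.

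\emph{Main obstacle.} The load-bearing steps are the one-vertex estimates \eqref{eq:pibound} and \eqref{eq:cheb}. In \eqref{eq:pibound} one must justify the comparison $M_i(t)\preceq\Bin(t,p)$: the probabilities $p_s$ for $s>A(0)$ are themselves random (they encode survival of the exploration and its past), so this is a coupling statement rather than an independence statement, and it is applied next to $\deg(i)\sim\Bin(n-1,p)$ although $M_i(t)$ and $\deg(i)$ are correlated. The resolution is that \eqref{eq:pibound} is only a union bound on two \emph{marginal} events, so the correlation plays no role there; the actual work lies in making the coupling behind \eqref{eq:key} precise and in the routine-but-delicate Chernoff estimates that produce the exponent $\tfrac13$ in Part \ref{theo:p>>1overni}. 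Everything downstream — turning the bounds on $\E[S(t)]$ and $\E[R(t)]$ into statements about $T=\Ax$ — is elementary, relying only on Markov's inequality and the monotonicity and stopping-time description of $T$ from \refS{Ssetup}.
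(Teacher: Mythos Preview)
Your proposal is correct and follows the same overall strategy as the paper: bound the per-vertex activation probability by Chernoff-type estimates and convert this into control of $T=\Ax$ via Markov's inequality on $S(t)$ (Parts \ref{theo:p>>1overni}--\ref{theo:p>>1overnii}) or on $R(A(0))$ (Part \ref{theo:p>>1overniii}).

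There are a few streamlinings worth noting. First, you apply Markov directly to $\E[S(t)]=(n-A(0))\pi(t)$, whereas the paper routes through the stochastic domination $S(t)\preceq S^+(t)\in\Bin(n-A(0),\pi^+(t))$ of \refL{lem:larger} and Corollary~\ref{cor:upperboundst} before applying Markov; since only first moments enter here, your route is shorter and shows that the positive-association machinery of \refS{Sbound} is not actually needed for this theorem (the paper develops it for the $p=c/n$ analysis). Second, your union bound \eqref{eq:pibound} splits on the \emph{full} degree $\deg(i)\sim\Bin(n-1,p)$, while the paper's $\pi^+(t)$ splits on the outward degree $\Bin(n-t-1,p)$; both lead to the same $2e^{-np/3}$ bound after Chernoff, with the same caveat that the coupling $M_i(t)\preceq\Bin(t,p)$ (the paper's \refL{lem:stocdomin}) is doing the real work. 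Third, for Part \ref{theo:p>>1overniii} you use a single Chebyshev bound on $X-Y$, whereas the paper uses two exponential bounds at the cutoff $\tfrac12 np$; Chebyshev suffices precisely because the hypothesis places $A(0)-\tfrac{n}{2}$ far beyond the standard-deviation scale $\sqrt{n/p}$, and your computation makes the appearance of that scale transparent.
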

Notice that for example, the statement of equation \eqref{eq:p>>1overnii} is equivalent to
\begin{equation}\label{eq:p>>1overniibis}
\lim_{n \to \infty} \P \left\{ A^* \geq (1+\varepsilon) A(0) \right\} = 0,
\end{equation}
We give here the counterpart of Theorem \ref{theo:p>>1overn} using the setting of \cite{BBM}, that is, when the vertices are initially activated independently with some probability $q$.
\begin{cor}\label{cor:p>>1overn}
Let $\frac{1}{n} \ll p \leq 1$. Suppose that the vertices are initially activated independently with probability $q \in (0,1)$.
\begin{romenumerate}
\item \label{cor:p>>1overni}
 If $q < 1/2$ then
\begin{equation}
\Ax = A(0) \ettop.
\end{equation}
\item \label{cor:p>>1overnii}
 If $q >1/2$ then
\begin{equation}
\Ax = n - o_p(n).
\end{equation}
\end{romenumerate}
\end{cor}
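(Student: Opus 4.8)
The plan is to put the $q$-activation model on the same probability space as the fixed-size model of \refT{theo:p>>1overn}, coupled monotonically, so that parts \ref{theo:p>>1overnii} and \ref{theo:p>>1overniii} of that theorem apply with essentially no extra work. Concretely, I would take a uniformly random permutation $\pi$ of $V_n$ and an independent $N\simin\Bin(n,q)$, independent also of $\gnp$, and set $\cao=\set{\pi(1),\dots,\pi(N)}$. A one-line computation gives $\P\set{\cao=S}=\P\set{N=|S|}\big/\binom{n}{|S|}=q^{|S|}(1-q)^{n-|S|}$ for every $S\subseteq V_n$, so $\cao$ indeed has the law of the initial set in the $q$-activation model, and $A(0)=N$. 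For deterministic $m$ let $\cA_m(0)=\set{\pi(1),\dots,\pi(m)}$, a uniformly random $m$-subset independent of $\gnp$ --- that is, exactly the initial configuration of \refT{theo:p>>1overn} with $A(0)=m$ --- and write $\Ax_m$ for its final active size. Since majority bootstrap percolation on a fixed graph is monotone in the initial set, on $\set{m\le N}$ we have $\cA_m(0)\subseteq\cao$, hence $\Ax_m\le\Ax$, and on $\set{N\le m}$ we have $\cao\subseteq\cA_m(0)$, hence $\Ax\le\Ax_m$.

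For part~\ref{cor:p>>1overni} I would fix $\eps>0$ with $q+\eps<\tfrac12$ and take $m_+=\ceil{(q+\eps)n}$. By a Chernoff bound $\P\set{N>m_+}\to0$, so $\whp$ $\cao\subseteq\cA_{m_+}(0)$ and $\Ax\le\Ax_{m_+}$; \refT{theo:p>>1overn} \ref{theo:p>>1overnii} then gives $\Ax_{m_+}=m_+\ettop$, so $\Ax/n\le(q+\eps)\ettop$. On the other hand $\Ax\ge A(0)=N$ always and $N/n\pto q$ by concentration of the binomial, so $\Ax/n\ge q-\op(1)$. Letting $\eps\downarrow0$ yields $\Ax/n\pto q$, and since $q$ is a constant in $(0,1)$ this is the same as $\Ax=A(0)+\op(n)=A(0)\ettop$.

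For part~\ref{cor:p>>1overnii} I would fix $\eps>0$ with $q-\eps>\tfrac12$ and take $m_-=\floor{(q-\eps)n}$. By a Chernoff bound $\P\set{N<m_-}\to0$, so $\whp$ $\cA_{m_-}(0)\subseteq\cao$ and $\Ax\ge\Ax_{m_-}$. Here $m_- - \tfrac12 n=(q-\eps-\tfrac12)n+O(1)$ is of order $n$, while $\sqrt{n/p}=o(n)$ because $p\gg\tfrac1n$ forces $np\to\infty$; hence $\bigpar{m_- - \tfrac12 n}\big/\sqrt{n/p}\to+\infty$, and \refT{theo:p>>1overn} \ref{theo:p>>1overniii} gives $\Ax_{m_-}=n-\op(n)$. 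With the trivial bound $\Ax\le n$ this gives $\Ax=n-\op(n)$.

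I expect no substantial obstacle here: all the probabilistic content sits in \refT{theo:p>>1overn}, and the passage from a random $\Bin(n,q)$ initial size to a deterministic one is entirely handled by the monotone coupling above together with the exponential concentration of $\Bin(n,q)$. The one point that needs a word is that \refT{theo:p>>1overn} \ref{theo:p>>1overnii} and \ref{theo:p>>1overniii} are stated with $A(0)=\theta n$ for a fixed $\theta$, whereas I apply them with $A(0)=\ceil{(q+\eps)n}$ and $A(0)=\floor{(q-\eps)n}$; this is harmless, since an $O(1)$ change in $A(0)$ is absorbed in the $\ettop$, and the proofs of those two parts go through verbatim for any initial size whose density stays in a fixed compact subinterval of $(0,\tfrac12)$, respectively satisfies the hypothesis of~\ref{theo:p>>1overniii}.
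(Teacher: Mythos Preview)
Your proof is correct and follows essentially the same route as the paper: reduce to \refT{theo:p>>1overn} by concentration of $A(0)\simin\Bin(n,q)$. The paper does this in one line via Chebyshev and a conditioning argument, whereas you spell out an explicit monotone coupling through a random permutation; your version is more careful about the monotonicity step and about the $\theta n$ versus $\ceil{(q\pm\eps)n}$ mismatch, but the idea is the same.
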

\begin{proof}[Proof of Corollary ]
Let $\lambda>0$ and let $q <\frac{1}{2}$ then the number of vertices initially active is 
\begin{equation}
A(0) \in \Bin\left( n, q \right).
\end{equation}
We know that $\Var \left( A(0)\right)  \leq \E \left( A(0)\right) = nq$ so using Chebyshev's inequality, we find that for any  $ 0< \lambda < \frac{1}{2}-q$ 
\begin{equation}\label{eq:translate}
\lim_{n \to \infty} \P \left\{ A(0) \geq \left( q + \lambda \right) n \right\} = 0.
\end{equation}
By use of Theorem \ref{theo:p>>1overn} \ref{theo:p>>1overnii} and equation \eqref{eq:translate} we find that
\begin{align*}
\lim_{n \to \infty} \P \left\{ \Ax > (1+\epsilon) A(0) \right\}
&  \leq \lim_{n \to \infty} \P \left\{ \Ax > (1+\epsilon) A(0)\  \Big| \  A(0) \leq \left( q + \lambda \right) n \right\} 
 \\ 
 & \qquad  + \lim_{n \to \infty} \P \left\{ A(0) \geq \left( q + \lambda \right) n \right\} 
 \\
 & = 0.
\end{align*}
That proves corollary \ref{cor:p>>1overn} \ref{cor:p>>1overni}. The item \ref{cor:p>>1overnii} can be proved similarly using Theorem \ref{theo:p>>1overn} \ref{theo:p>>1overnii} and concentration results on the binomial random variable. 
\end{proof}

\section{Probability of activation of a vertex}\label{Sprob}
We start by determining the probability of activation of a vertex $i \in V \setminus \cA(0)$ as it will be needed all along the article,
\begin{equation*}
\pi(t) = \P \left\{ Y_i \leq t \right\}.
\end{equation*}
We use the notation  
\begin{equation}\label{eq:bindeg}
\Bin_i([1,n],p) \in \Bin(n-1,p)
\end{equation}
 to denote the degree of the vertex $i$, that is a sum of Bernoulli $\Be(p)$ independent random variables corresponding to the existence of an edge to another vertex. We denote
\begin{equation}\label{eq:binn-t-1}
\Bin_i([t+1,n],p) \in \Bin(n-t-1,p),
\end{equation} 
the number of links that the vertex $i$ has to the set $\{t,...,n\} = V \setminus \cZ(t)$.
The random variables $\Bin_i([1,t],p) $ and $\Bin_i([t+1,n],p)$ are independent as they concern summations of independent Bernoulli random variables on disjoint sets.
The number of links of the vertex $i$ to the set of vertices $\{1,...,t\} = \cZ(t)$ constructed in the algorithm is denoted $M_i(t)$.
Remark that the equality $M_i(t) \in \Bin(t,p)$ is in general not true 
because the vertices of $\cZ(t) \setminus \cA(0)$ need to verify the condition \eqref{eq:condideg}. 
In the special case when $t\leq A(0)$ then the condition \eqref{eq:condideg} does not need to be fulfilled.Therefore, we have $M_i(t) \in \Bin(t,p)$ for $t\leq A(0)$.
In the following, we abuse notations and write for example $\Bin(t,p)$ for a random variable with binomial distribution $\Bin(t,p)$.


Since a vertex only accumulates marks, we have
\begin{align}\label{eq:pitdef}
\pi(t) & = \P \left\{ M_i(t) \geq \max \left( \frac{1}{2} \deg(i) ; 1\right) \right\}
\nonumber
\\
& = \P \left\{ M_i(t) \geq \max \left( \frac{1}{2} \Bin_i([1,n],p) ; 1\right) \right\}
\nonumber
\\
& = \P \left\{ \sum_{s=1}^t \1_i (s) \geq \max \left( \frac{1}{2} \Bin_i([1,n],p) ; 1\right) \right\}.
\end{align}
The probability of activation can also be rewritten
\begin{align*}\label{eq:alternatepitdef}
\pi(t) 
& = \P \left\{ M_i(t) \geq \max \left( \Bin_i([t+1,n],p) ; 1\right) \right\}
\\
& = \P \left\{  \sum_{s=1}^t \1_i (s) \geq \max \left( \Bin_i([t+1,n],p) ; 1\right) \right\}.
\end{align*}
\begin{lem}\label{lem:stocdomin}
The random variable $M_i(t)$ is stochastically  dominated by $\Bin(t,p) $.
\end{lem}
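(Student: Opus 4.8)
The plan is to produce a coupling in which $M_i(t)$ is bounded \emph{pathwise} by a sum of $t$ independent $\Be(p)$ variables; this is precisely stochastic domination by $\Bin(t,p)$.

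Write $\cF_s$ for the $\sigma$-algebra generated by the exploration of \refS{Ssetup} up to time $s$ (the used vertices $u_1,\dots,u_s$, the marks they hand out, and whatever is needed to decide the successive activations), and recall $\1_i(s)=\1\{u_s\in\cN_i\}$, so that $M_i(t)=\sum_{s=1}^{t}\1_i(s)$ up to the time $i$ is used; since nothing that happens after that affects the quantity of interest (remark after \eqref{mi}), I set $\1_i(s):=0$ for $s$ beyond that time. The crux is the conditional form of \eqref{eq:key}:
\begin{equation*}
\P\bigl(\1_i(s)=1\mid\cF_{s-1}\bigr)\le p\qquad\text{a.s.}
\end{equation*}
For $s\le A(0)$ this holds with equality, because $u_s\in\cA(0)$ is picked without looking at the graph and $\1_i(s)\in\Be(p)$ is an independent fresh coin (cf.\ \eqref{eq:indic0}). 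For larger $s$, given $\cF_{s-1}$ the next vertex $u_s$ --- the head of the queue --- is determined, with $u_s\neq i$ and $u_s\notin\cZ(s-1)$; the only information $\cF_{s-1}$ carries about the particular edge $\{u_s,i\}$ comes through the facts that $u_s$ is already active while $i$ is still inactive and un-used, and the activation test \eqref{eq:condideg} for $u_s$ counts $\1\{u_s\in\cN_i\}$ on the inhibitory side, so conditioning on it can only push that edge toward being absent. Hence the conditional chance of a mark is at most its prior value $p$, exactly as in the derivation of \eqref{eq:key}.

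Granting this inequality, I would construct the dominating variables one step at a time on an enlarged probability space: with $q_s:=\P(\1_i(s)=1\mid\cF_{s-1})\le p$, set $\xi_s:=1$ whenever $\1_i(s)=1$, and, using fresh independent randomness when $\1_i(s)=0$, set $\xi_s:=1$ with conditional probability $(p-q_s)/(1-q_s)$. Then $\1_i(s)\le\xi_s$ and $\P(\xi_s=1\mid\cF_{s-1})=p$ identically, so the $\xi_s$ form an i.i.d.\ $\Be(p)$ sequence; therefore
\begin{equation*}
M_i(t)=\sum_{s=1}^{t}\1_i(s)\le\sum_{s=1}^{t}\xi_s\eqd\Bin(t,p),
\end{equation*}
which is the assertion.

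The genuine obstacle is making the displayed conditional inequality precise, i.e.\ controlling exactly how much $\cF_{s-1}$ reveals about edges incident to the still-inactive vertex $i$. A literal reading of the exploration would reveal $\deg(i)$ (the majority rule needs degrees) and thereby bias $\P(\{u_s,i\}\in E\mid\cF_{s-1})$ away from $p$; the fix is to expose the edges from $i$ to the not-yet-used vertices lazily --- revealing, at each step, only enough of them to certify whether $i$ has yet reached half its degree --- and then to check that, even counting these ``certificate'' edges, the conditional mark probability never exceeds $p$. Once $i$ has been used the bound is trivial, since we have set $\1_i(s)=0$ there.
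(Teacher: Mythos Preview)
Your sequential coupling is a genuinely different route from the paper's. The paper argues unconditionally: it writes
\[
\{M_i(t)\ge k\}=\bigcup_{\substack{\cL_k\subseteq\{1,\dots,t\}\\|\cL_k|=k}}\ \bigcap_{j\in\cL_k}\{\1_i(j)=1\},
\]
uses the inclusion $\{\1_i(j)=1\}\subseteq\{|\cZ(j)|=j\}\cap\{(u_j,i)\in E\}$, and asserts directly that
\[
\P\Bigl(\bigcap_{j\in\cL_k}\{\1_i(j)=1\}\Bigr)\le \P\bigl((u_{s_1},i)\cap\cdots\cap(u_{s_k},i)\bigr)=p^k,
\]
since the $u_{s_j}$ are distinct vertices and the corresponding edges to $i$ are fresh $\Be(p)$ coins. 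The comparison with $\{\Bin(t,p)\ge k\}$ then follows because the latter admits the same union decomposition with the same number of subsets and the same bound $p^k$ per subset. No filtration appears anywhere; the argument lives entirely at the level of joint probabilities.

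Your construction of the $\xi_s$ is the standard coupling and is correct \emph{given} the conditional inequality $\P(\1_i(s)=1\mid\cF_{s-1})\le p$. But the obstacle you flag in your last paragraph is real and you have not resolved it: each activation decision entering $\cF_{s-1}$ uses $\deg(j)$, which depends on $\1\{(j,i)\in E\}$, so when that same $j$ later becomes $u_s$ the edge $(u_s,i)$ has already been partially exposed and the conditional probability is not obviously $\le p$. Your ``lazy exposure'' idea is the right instinct, but the majority rule makes it delicate --- the threshold for every $j\ne i$ also moves when you hide $(j,i)$ --- and you have left it as a sketch. The paper's device of bounding the \emph{joint} event $\bigcap_j\{\1_i(j)=1\}$ by $p^k$ in one stroke is exactly what lets it bypass this bookkeeping; if you want to salvage the coupling approach, you will need either to implement the lazy exposure carefully or to import that joint bound as a lemma.
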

\begin{proof}[Proof of Lemma \ref{lem:stocdomin}]
\begin{equation}
\P \left\{ M_i(t) \geq k \right\} = \P \left\{ \sum_{s=1}^t \1_i(s) \geq k \right\}.
\end{equation}
Let $\cL_k$, with $|\cL_k| =k$, be some 
subset of $\{1,..., t\}$. Then 
\begin{align*}
\P \left\{ \sum_{s=1}^t \1_i(s) \geq k \right\}
& = \P \left\{ \bigcup_{\cL_k \subseteq \{1,...,t\}} \left( \sum_{j \in \cL_k} \1_i(j) =k \cap \sum_{j \notin \cL_k} \1_i(j) \geq 0  \right) \right\},
\end{align*}
where the event $ \left\{ \sum_{j \notin \cL_k} \1_i(j) \geq 0 \right\}$ is always fulfilled as the random variable $\1_i(j)$ can take only the values $0$ and $1$.
Moreover,
\begin{equation}
\P \left\{\sum_{j \in \cL_k} \1_i(j) =k \right\} = \P \left( \bigcap_{j \in \cL_k} \left\{ \1_i(j) =1 \right\}\right),
\end{equation}
where
\begin{equation}\label{eq:pop}
\P \left\{ \1_i(s_1) =1\right\} = \P \left(\left\{ s_1 \text{ is active } \right\} \cap (s_1,i) \right)
\leq \P \left\{ (s_1,i)\right\} = p.
\end{equation}
Equation \eqref{eq:pop} is exactly equation \eqref{eq:key} rephrased in another setting.

For any subset of $\{1,...,t \}$, we have
\begin{equation}\label{eq:lessthanpk1}
\P \left( \bigcap_{j \in \cL_k} \left\{ \1_i(j) =1 \right\}\right) \leq \P\left\{ (s_1,i)\cap ... \cap (s_k,i) \right\} = p^k,
\end{equation}
and the inequality \eqref{eq:lessthanpk1} is fulfilled for any choice of $\cL_k$. The number of such lists is obviously smaller than the number of subset of length $k$. Therefore

\begin{equation}
 \P \left\{ \bigcup_{\cL_k \subseteq \{1,...,t\}} \left( \sum_{j \in \cL_k} \1_i(j) =k \cap \sum_{j \notin \cL_k} \1_i(j) \geq 0  \right) \right\}
\leq \P\left\{ \Bin(t,p)  \geq k \right\}.
\end{equation}
That means
\begin{equation}\label{eq:dominator}
\P \left\{ M_i(t) \geq k \right\} \leq  \P\left\{ \Bin(t,p)  \geq k \right\},
\end{equation}
for any $k \leq t$.
\end{proof}

\begin{lem}\label{lem:pi+}
Let
\begin{equation}\label{eq:pi+}
\pi^+(t) = \P \left\{ \Bin(t,p)  \geq \max \left( \Bin(n-1-t,p); 1\right) \right\},
\end{equation}
then
\begin{equation}\label{eq:pileqpi+}
\pi(t) \leq \pi^+(t) \quad \text{for any } t.
\end{equation}
Moreover, for $t \leq A(0)$, the vertices $s \leq t$ are initially active therefore, the probability that a vertex $i$ receives a mark from $s$ is exactly the probability to have an edge between them, thus
\begin{equation}\label{eq:pi=pi+}
\pi(t) = \pi^+(t) \quad \text{ for } t \leq A(0).
\end{equation}
\end{lem}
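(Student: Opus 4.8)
The plan is to combine the stochastic domination $M_i(t)\preceq\Bin(t,p)$ of \refL{lem:stocdomin} with the monotonicity of the event defining $\pi(t)$. Write $R_i(t)\=\Bin_i([t+1,n],p)$ for the number of edges from $i$ to the vertices not in $\cZ(t)$, so that the alternative expression for $\pi(t)$ obtained above reads $\pi(t)=\P\{M_i(t)\ge\max(R_i(t);1)\}$, and note that $(a,b)\mapsto\ind\{a\ge\max(b;1)\}$ is nondecreasing in $a$ and nonincreasing in $b$. The second ingredient I would use is that running the exploration up to time $t$ only inspects edges incident to the used vertices $u_1,\dots,u_t$: the $n-1-t$ potential edges from $i$ to the not-yet-used vertices have not been looked at, so $R_i(t)\sim\Bin(n-1-t,p)$ and $R_i(t)$ is independent of the information generated by the exploration up to time $t$, in particular of $M_i(t)$.

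To prove \eqref{eq:pileqpi+} I would condition on $R_i(t)$. The bound \eqref{eq:key} $p_s\le p$ is the only input to the proof of \refL{lem:stocdomin}, and it stays valid after conditioning on $R_i(t)$ (it only says that a potential edge at $i$ is open with probability at most $p$); hence conditionally on $R_i(t)$ the variable $M_i(t)$ is still stochastically dominated by $\Bin(t,p)$. Using monotonicity in the first argument, and then averaging over $R_i(t)\sim\Bin(n-1-t,p)$ realised independently of a fresh $\Bin(t,p)$, gives
\begin{align*}
\pi(t)&=\E\bigl[\P\{M_i(t)\ge\max(R_i(t);1)\mid R_i(t)\}\bigr]\\
&\le\E\bigl[\P\{\Bin(t,p)\ge\max(R_i(t);1)\}\bigr]\\
&=\P\{\Bin(t,p)\ge\max(\Bin(n-1-t,p);1)\}=\pi^+(t).
\end{align*}

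For the equality \eqref{eq:pi=pi+} when $t\le A(0)$, I would use that $u_1,\dots,u_t$ are then just the first $t$ elements of $\cao$ in the queue, a choice made without looking at any edge of the graph; consequently, as already observed in \eqref{eq:indic0}, for $s\le t$ the indicator $\ind_i(s)$ is exactly the indicator of the edge $\{u_s,i\}$ and these are independent $\Be(p)$ variables, so $M_i(t)$ is \emph{exactly} $\Bin(t,p)$ rather than merely dominated by it. Since $\cZ(t)\subseteq\cao$ while $i\notin\cao$, the remaining $n-1-t$ edge-indicators at $i$ sum to $R_i(t)$, which is exactly $\Bin(n-1-t,p)$ and independent of $M_i(t)$. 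Substituting into $\pi(t)=\P\{M_i(t)\ge\max(R_i(t);1)\}$ turns the displayed chain into equalities, so $\pi(t)=\pi^+(t)$.

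The hard part will be the bookkeeping behind the independence claim in the first step: one must argue carefully that the exploration up to time $t$ reveals nothing about the edges from $i$ to the not-yet-used vertices — so that $R_i(t)$ is a genuine independent $\Bin(n-1-t,p)$ — that \refL{lem:stocdomin} indeed upgrades to the conditional-on-$R_i(t)$ statement, and that the (harmless) case $i\in\cZ(t)$, where $Y_i\le t$ holds automatically and already lies inside the event defining $\pi(t)$, causes no loss of generality in the indexing. Once these points are pinned down, the monotonicity argument is routine.
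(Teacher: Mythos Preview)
Your proof is correct and follows essentially the same route as the paper's: decompose on the value of the threshold $R_i(t)=\Bin_i([t+1,n],p)$ and invoke the stochastic domination of \refL{lem:stocdomin}. The paper sums over $\{\max(R_i(t);1)=k\}$ and replaces $\{M_i(t)\ge k\}$ by $\{\Bin_i([1,t],p)\ge k\}$ inside each summand, whereas you phrase the same step as conditioning on $R_i(t)$ and observing that the argument behind \refL{lem:stocdomin} (namely \eqref{eq:key}) survives this conditioning; your formulation is in fact more careful, since the paper's step from the joint probability to the bound tacitly uses exactly the conditional independence you spell out.
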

\begin{proof}[Proof of Lemma \ref{lem:pi+} ]
To begin with, we recall equation \eqref{eq:lessthanpk1}. For any subset $\cL_k \subset \{1,...,t\}$ with $|\cL_k| = k$
\begin{equation}\label{eq:lessthanpk}
\P \left( \bigcap_{j \in \cL_k} \left\{ \1_i(j) =1 \right\}\right) \leq \P\left\{ (s_1,i)\cap ... \cap (s_k,i) \right\} = p^k
\end{equation}
Consider the probability of activation
\begin{align}\label{eq:pi1}
\pi(t)
& = \P \left\{ M_i(t) \geq \max\left( \Bin_i([t+1,n],p) ; 1 \right) \right\}
\nonumber
\\
& = \sum_{k=1}^{t} \P \left( \left\{ M_i(t) \geq k \right\} \cap \left\{ \max\left( \Bin_i([t+1,n],p) ; 1 \right) =k \right\} \right)
\end{align}
By lemma \ref{lem:stocdomin}, using equation \eqref{eq:binn-t-1} and \eqref{eq:dominator} in \eqref{eq:pi1} 
\begin{align}\label{eq:pi2}
\pi(t)
& \leq \sum_{k=1}^{t}  \P \left( \left\{ \Bin_i([1,t],p)  \geq k \right\} \cap \left\{ \max\left( \Bin_i([t+1,n],p) ; 1 \right) =k \right\} \right)
\nonumber
\\
& \leq \P \left\{ \Bin(t,p)  \geq  \max\left( \Bin(n-t-1,p) ; 1\right)  \right\} = \pi^+(t)
\end{align}
\end{proof}
In the proofs, we will use equality \eqref{eq:pi=pi+} with the fact that
\begin{equation}\label{eq:lowerbound}
A\left( A(0)\right) \leq A^*,
\end{equation}
to determine conditions for the supercritical case.
 To prove Theorem \ref{theo:p>>1overn} \ref{theo:p>>1overniii}, we show that by the time the vertices of $\cA(0)$ have been explored, the process has already almost percolated.
In order to find conditions for the process to stay subcritical, we use the inequality \eqref{eq:pileqpi+} and define the random process $\left(S^+(t)\right)_{t \leq n}$ with $S^+(t) \in \Bin\left( n-A(0), \pi^+(t)\right)$.
In the following, we show that $S^+(t)$ stochastically dominates $S(t)$.
\section{Subcritical phase, a useful upper bound}\label{Sbound}
It is simpler to start by proving that the random variable $R(t)$ dominates a certain binomial random variable. It is easy to see that the random variables $K_i(t)$, with $R(t) = \sum K_i(t)$ (see equation \eqref{eq:defrt}) are positively related, see equation \eqref{eq:postivrel} below. The same question is more complicated with the random variables $I_i(t)$ as it depends on whether the connections have been revealed or not (see Figure \ref{graph:gnpact}). We further use that $R(t) + S(t) = n-A(0)$ to transfer the result in terms of $S(t)$ and $S^+(t) \in \Bin\left( n-A(0), \pi^+(t)\right)$.
\begin{lem}\label{lem:larger}
For any $t$ and $k_0 \geq 0$
\begin{equation}\label{eq:larger}
\P \left\{ R(t) \geq k_0\right\} \geq \P \left\{ \Bin(n-A(0),\delta(t)) \geq k_0 \right\}.
\end{equation}
\end{lem}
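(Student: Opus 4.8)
The plan is to derive \eqref{eq:larger} from the fact that the inactivity indicators $K_1(t),\dots,K_{n-A(0)}(t)$ are \emph{positively related} — conditioning on the status of any collection of other vertices cannot make a given vertex less likely to be inactive at time $t$ — and then to turn this into stochastic domination by a one-variable-at-a-time coupling. Concretely, write $m=n-A(0)$, relabel the non-initially-active vertices as $1,\dots,m$, and reduce the lemma to the inequality
\begin{equation}\label{eq:postivrel}
\P\{K_i(t)=1 \mid K_1(t)=x_1,\dots,K_{i-1}(t)=x_{i-1}\}\ \ge\ \delta(t)
\qquad (i=1,\dots,m),
\end{equation}
valid for every $(x_1,\dots,x_{i-1})\in\{0,1\}^{i-1}$. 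Granting \eqref{eq:postivrel}, one builds on one probability space the $K_i(t)$ together with i.i.d.\ $\eta_i\in\Be(\delta(t))$ by revealing $K_1(t),K_2(t),\dots$ in turn: at step $i$ the conditional probability of $\{K_i(t)=1\}$ given the revealed past is some $q_i\ge\delta(t)$, so setting $\eta_i=1$ with probability $\delta(t)/q_i$ on $\{K_i(t)=1\}$ and $\eta_i=0$ otherwise makes $\eta_i$ an independent $\Be(\delta(t))$ variable with $\eta_i\le K_i(t)$. Then $\sum_i\eta_i\le\sum_iK_i(t)=R(t)$ and $\sum_i\eta_i$ has the $\Bin(m,\delta(t))$ distribution, which is exactly \eqref{eq:larger}.

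It remains to establish \eqref{eq:postivrel}. The guiding observation is that vertex $i$ interacts with the rest of the graph only through the edges incident to $i$: whether $i$ is active at time $t$ is decided by $M_i(t)\ge\max(\deg(i)/2,1)$, and while $i$ is still inactive this is the same as $M_i(t)\ge\max(N_i(t),1)$, where $N_i(t)$ counts the edges from $i$ to vertices not yet explored. An extra edge at $i$ raises $\deg(i)$ — hence the majority threshold — while it can contribute a mark to $i$ only once its endpoint has been explored; so additional edges at $i$ are never favourable to the activation of $i$. To exploit this I would run the exploration with $i$ frozen (its shadow mark-count is tracked but it is never placed in the queue): since a vertex inactive at time $t$ is in any case never explored by time $t$, this alters neither the answer to ``$Y_i>t$?'' nor, on $\{Y_i>t\}$, the time-$t$ statuses of the other vertices. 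Revealing first all edges avoiding $i$, together with the frozen exploration they drive, and only afterwards the edges at $i$ — which are then independent $\Be(p)$ — one checks that for every outcome of the statuses $(K_j(t))_{j\ne i}$ the conditional law of $\{K_i(t)=1\}$ stochastically dominates its unconditional probability $\delta(t)$, giving \eqref{eq:postivrel}.

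The main obstacle is precisely this last verification: the exploration order is itself a function of all the edges, so one must argue with some care that the dependence of $K_i(t)$ on the remaining vertices genuinely factors through the edges at $i$, and this is where the asymmetry between $K_i(t)$ (inactivity — monotone in the favourable direction once $i$ is frozen) and $I_i(t)$ (activity) is used. Once \eqref{eq:larger} is in hand, combining it with $\delta(t)=1-\pi(t)\ge1-\pi^+(t)$ from \refL{lem:pi+} and the identity $R(t)=n-A(0)-S(t)$ shows that $S^+(t)\in\Bin(n-A(0),\pi^+(t))$ stochastically dominates $S(t)$, which is the form needed in the subcritical analysis.
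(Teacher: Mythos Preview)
Your sequential-coupling reduction is sound \emph{once} the conditional inequality is granted, but that inequality---$\P\{K_i(t)=1\mid K_1(t)=x_1,\dots,K_{i-1}(t)=x_{i-1}\}\ge\delta(t)$ for \emph{every} pattern $(x_1,\dots,x_{i-1})\in\{0,1\}^{i-1}$---is strictly stronger than what the paper uses, and your freezing heuristic does not establish it. Freezing $i$ does give $\P\{K_i=1,\ K_j=x_j\ \forall j<i\}=\P\{\tilde K_i=1,\ \tilde K_j=x_j\ \forall j<i\}$, since on $\{K_i(t)=1\}$ the real and $i$-frozen explorations coincide up to time $t$. But the \emph{denominator} $\P\{K_j=x_j\ \forall j<i\}$ is computed in the real process, and on $\{K_i(t)=0\}$---once $i$ has been explored---the real and frozen statuses $(K_j)_{j\ne i}$ and $(\tilde K_j)_{j\ne i}$ need not agree. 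Conditioning on some $K_j(t)=0$ thus carries information about whether $i$ was explored, and nothing in your argument rules out that this information is \emph{un}favourable to $\{K_i(t)=1\}$. Your monotonicity claim ``additional edges at $i$ are never favourable to the activation of $i$'' is also not correct: an edge from $i$ to an already-explored vertex raises both $M_i(t)$ and $\deg(i)$ by one, so $M_i(t)-\tfrac12\deg(i)$ goes \emph{up} by $\tfrac12$, which is favourable to activation.

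The paper avoids this strong conditional claim altogether. It uses only the symmetric positive-relation bound
\[
\P\Bigl\{\bigcap_{j\in\cL}\{K_j(t)=1\}\Bigr\}\ \ge\ \prod_{j\in\cL}\P\{K_j(t)=1\}=\delta(t)^{|\cL|},
\]
i.e.\ your inequality in the all-ones case $x_1=\cdots=x_{i-1}=1$, and then exploits the \emph{exchangeability} of the $K_j(t)$ to compare $\P\{R(t)\ge k_0\}$ with $\P\{\Bin(m,\delta(t))\ge k_0\}$ directly, without building a coupling. If you want to rescue the coupling route you would need either a genuine monotone comparison between the real and $i$-frozen explorations valid also on $\{K_i(t)=0\}$, or a separate argument that exchangeability plus the all-ones conditional bound already forces stochastic domination of the sum---neither of which is supplied.
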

The proof of Lemma \ref{lem:larger} is kind of the reverse of the proof of Lemma \ref{lem:stocdomin}.
Conversely to Lemma \ref{lem:stocdomin}, in the case of Lemma \ref{lem:larger}, the random variable $R(t)$ dominates the binomial.
The random variables  $K_i$ are positively related. Let $\cL_k$ be some subset of $V \setminus \cA(0)$ of $k$ elements, then if some vertices are inactive, that is $\left\{ \cap_{j \in \cL_k } K_j (t)=1 \right\}$, they tend to keep the other vertices inactive too, that is $\left\{ K_i (t)= 1 \right\}$ and we have
\begin{equation}\label{eq:postivrel}
\P \left\{ \bigcap_{j \in \cL_k} K_j (t)= 1\right\} \geq \prod_{j \in \cL_k} \P \left\{ K_j (t)=1\right\}.
\end{equation}
The inequality \eqref{eq:postivrel} can be derived for 2 random variables, that is $k=2$ and extended to any $k$ by induction.

The inequality was reversed in the proof of Lemma \ref{lem:stocdomin} and we didn't have to worry about the number of combinations. In the case of Lemma \ref{lem:larger}, it is crucial that the number of subsets $\cL_k$ is equal to the number  of combinations of the binomial. This is ensured by the fact that the random variables $K_j(t)$ are exchangeable.
\begin{proof}[Proof of Lemma \ref{lem:larger}]
From the beginning, we have that the relation \eqref{eq:larger} is verified for $k_0=0$ since both probabilities equal 1.

We recall that $R(t) = \sum_{i=1}^{n-A(0)} K_i(t)$ more precisely, we will write
$R_{n-A(0)} = \sum_{i=1}^{n-A(0)} K_i$ to emphasise the dependence on the number of terms we sum up and will omit the indicator of time $t$.
The random variables $K_i$ are exchangeable, therefore
\begin{align*}
\P \left\{ R_{n-A(0)} \geq k_0 \right\}
& = \P \left(\left\{ R_{n-A(0)-k_0} \geq 0\right\} \cap \left\{ K_{n-A(0)-k_0 +1} =1\right\} \cap ... \cap \left\{K_{n-A(0)} =1 \right\}\right) \alpha_{n-A(0),k_0}
\\
&  = \P \left\{ R_{n-A(0)-k_0} \geq 0 \right\} \P \left\{ R_{k_0} = k_0 \right\}\alpha_{n-A(0),k_0},
\end{align*}
where $\alpha_{n-A(0),k_0}$ denotes the number of combinations. 

The random variables $K_i$ are positively related. 
So for any $m$ such that $m\geq 1$
\begin{equation}\label{eq:labelcasen}
\P \left\{ R_{m} = m \right\} \geq \P \left\{ \Bin(m,\delta) = m \right\}.
\end{equation}
Taking $m = n-A(0)$ in the inequality \eqref{eq:labelcasen}, we see that the relation \eqref{eq:larger} is verified for $k = n-A(0)$ too.

Because the indicator functions $K_j$ are exchangeable, the number of combinations $\alpha_{n-A(0),k_0}$ is the same for $\{R_{n-A(0)} \geq k_0\}$ and $\{ \Bin(n-A(0),\delta) \geq k_0\}$
\begin{align*}\label{eq:}
\frac{ \P \left\{ R_{n-A(0)} \geq k_0\right\} }{\P \left\{ \Bin(n-A(0),\delta) \geq k_0 \right\}}
& = \frac{ \P\left( \left\{ R_{n-A(0)-k_0} \geq 0 \right\} \cap \left\{ R_{k_0} = k_0\right\} \right)}{\P \left( \left\{ \Bin(n-A(0)-k_0,\delta) \geq 0 \right\} \cap \left\{ \Bin(k_0, \delta) = k_0 \right\} \right) } \frac{\alpha_{n-A(0),k_0}}{\alpha_{n-A(0),k_0}}.
\end{align*}
The events 
$ \left\{ R_{n-A(0)-k_0} \geq 0 \right\}$ and $\left\{ \Bin(n-A(0)-k_0,\delta) \geq 0 \right\} $ are always fulfilled. Hence
\begin{equation*}
 \frac{ \P\left( \left\{ R_{n-A(0)-k_0} \geq 0 \right\} \cap \left\{ R_{k_0} = k_0\right\} \right)}{\P \left( \left\{ \Bin(n-A(0)-k_0,\delta) \geq 0 \right\} \cap \left\{ \Bin(k_0, \delta) = k_0 \right\} \right) }
  = \frac{ \P \left\{ R_{k_0} = k_0 \right\}}{ \P\left\{ \Bin(k_0, \delta) = k_0 \right\} }.
 \end{equation*}
 Using \eqref{eq:labelcasen} in the case of $k_0$, we find that
 \begin{align*}
 \frac{ \P \left\{ R_{n-A(0)} \geq k_0\right\} }{\P \left\{ \Bin(n-A(0),\delta) \geq k_0 \right\}}
 & = \frac{\P \left\{ R_{k_0} = k_0 \right\} }{\P\left\{ \Bin(k_0, \delta) = k_0 \right\} } \geq 1,
\end{align*}
which proves Lemma \ref{lem:larger}.
\end{proof}
\begin{cor}\label{cor:upperboundst}
The random variable $S(t)$ is stochastically dominated by $\Bin\left(n-A(0), \pi(t) \right)$ 
\begin{equation}\label{eq:upper1}
\P \left\{ S(t) \geq k \right\} \leq \P \left\{ \Bin\left(n-A(0), \pi(t) \right) \geq k\right\}.
\end{equation}
Moreover
\begin{equation}\label{eq:upper2}
\P \left\{ S(t) \geq k \right\} \leq \P \left\{ \Bin\left(n-A(0), \pi^+(t) \right) \geq k\right\} = \P \left\{ S^+(t) \geq k \right\}
\end{equation}
\end{cor}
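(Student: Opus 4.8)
\textbf{Proof proposal for Corollary \ref{cor:upperboundst}.}
The plan is to read off both inequalities from Lemma \ref{lem:larger} by passing to complements, using only the elementary symmetry of the binomial law under $X \mapsto (n-A(0)) - X$ and the monotonicity of the binomial tail in its success probability. No new probabilistic input is needed beyond Lemmas \ref{lem:larger} and \ref{lem:pi+}: the substantive work (positive relatedness and exchangeability of the $K_i(t)$, which is what keeps the combinatorial factors under control) has already been done in Lemma \ref{lem:larger}.

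First I would set $N = n-A(0)$ and recall from \eqref{eq:defrt} that $S(t) + R(t) = N$, so that for every integer $k$ the event $\set{S(t) \ge k}$ is exactly $\set{R(t) \le N-k}$. For $k \le 0$ both sides of \eqref{eq:upper1} equal $1$ and for $k > N$ both sides equal $0$, so it suffices to treat $1 \le k \le N$. For such $k$,
\begin{equation*}
\P\set{S(t) \ge k} = \P\set{R(t) \le N-k} = 1 - \P\set{R(t) \ge N-k+1},
\end{equation*}
and applying Lemma \ref{lem:larger} with $k_0 = N-k+1 \ge 1$ gives
\begin{equation*}
\P\set{S(t) \ge k} \le 1 - \P\set{\Bin(N,\delta(t)) \ge N-k+1} = \P\set{\Bin(N,\delta(t)) \le N-k}.
\end{equation*}

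Next I would use that if $X \in \Bin(N,\delta(t))$ then $N-X \in \Bin(N,1-\delta(t)) = \Bin(N,\pi(t))$, since $\delta(t) = 1-\pi(t)$; hence $\set{X \le N-k} = \set{N-X \ge k}$ has probability $\P\set{\Bin(N,\pi(t)) \ge k}$, which yields \eqref{eq:upper1}. For \eqref{eq:upper2} I would combine this with Lemma \ref{lem:pi+}: the tail function $p \mapsto \P\set{\Bin(N,p) \ge k}$ is non-decreasing (a standard coupling of $\Bin(N,p)$ and $\Bin(N,p')$ for $p \le p'$), and $\pi(t) \le \pi^+(t)$, so $\P\set{\Bin(N,\pi(t)) \ge k} \le \P\set{\Bin(N,\pi^+(t)) \ge k} = \P\set{S^+(t) \ge k}$ by the definition of $S^+(t)$.

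The main obstacle is essentially already resolved upstream, so here there is no real difficulty; the only points requiring care are the boundary values $k \le 0$ and $k > N$, and keeping track of the direction of the inequality when complementing the event $\set{R(t) \ge k_0}$ — both handled above. I would also remark that the same argument, specialised to $t \le A(0)$ where $\pi(t) = \pi^+(t)$ by \eqref{eq:pi=pi+}, shows $S(t)$ and $S^+(t)$ have the same one-sided tail bounds in that regime, which is what is used together with \eqref{eq:lowerbound} in the supercritical arguments.
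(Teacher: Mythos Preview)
Your proof is correct and follows essentially the same route as the paper's own argument: pass from $S(t)$ to $R(t)$ via $S(t)+R(t)=n-A(0)$, apply Lemma~\ref{lem:larger} through the complement, use the binomial symmetry $N-\Bin(N,\delta)\eqd\Bin(N,\pi)$, and finish with $\pi(t)\le\pi^+(t)$ from Lemma~\ref{lem:pi+}. Your version is slightly more explicit about the boundary cases $k\le 0$, $k>N$ and about the complementation step, but there is no substantive difference.
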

\begin{proof}[Proof of Corollary \ref{cor:upperboundst}]
We have $n = A(0) + S(t) + R(t)$, so
\begin{align*}
\P \left\{ S(t) \geq k \right\}
& = \P \left\{ n-A(0) - R(t) \geq k \right\}
\\
& = \P \left\{ R(t) \leq n- A(0) -k \right\}
\\
&  \leq \P \left\{ \Bin \left( n-A(0),1 - \pi(t) \right) \leq n-A(0) - k \right\}.
\end{align*}
Since
\begin{equation*}
\P \left\{ \Bin \left( n-A(0),1 - \pi(t) \right) \leq n-A(0) - k \right\}
 = \P \left\{ \Bin \left( n-A(0),\pi(t) \right) \geq k \right\},
\end{equation*}
we deduce that
\begin{equation}
\P \left\{ S(t) \geq k  \right\} \leq \P \left\{ \Bin\left( n-A(0), \pi(t)\right)\geq k \right\},
\end{equation}
which is equation \eqref{eq:upper1}. Equation \eqref{eq:upper2} follows from the fact that $\pi^+(t) \geq \pi(t)$
(see equation \eqref{eq:pileqpi+}.
\end{proof}

\section{The case $p=o\left( \frac{1}{n} \right)$, proof of Theorem \ref{theo:o1overn}}\label{Sproof1}
In the case $p=o\left( \frac{1}{n} \right)$, we are going to prove that the system is subcritical. 
Indeed, there are so few connection that the activation cannot spread along it. 
We use a very crude bound for the probability of a vertex to be activated by using the condition that this vertex needs to receive at least one incoming activation.
\begin{proof}[Proof of Theorem \ref{theo:o1overn}]
We have in general
\begin{align*}
\pi(t) \leq \pi^+(t) & =  \P \left( \left\{ \Bin_i([1,t],p)  \geq \Bin_i([t+1,n],p)\right\} \cap \left\{ \Bin_i([1,t],p)  > 0 \right\} \right) 
\\
& \leq \P \left\{ \Bin_i([1,t],p)  > 0 \right\}.
\end{align*}
Using that $p = o\left( \frac{1}{n} \right) = o\left( \frac{1}{t} \right) $, we derive
\begin{equation*}
\P \left\{ \Bin_i([1,t],p)  > 0 \right\} = 1- \P \left\{ \Bin_i([1,t],p)  = 0 \right\} = 1- \left(1 - tp \etto \right) = tp \etto.
\end{equation*}
Therefore, using Corollary \ref{cor:upperboundst}, the expected number of vertices i.e. $\E\left( S(t)\right)$ that have been activated by time $t$ is bounded from above by
\begin{equation}\label{eq:s+ot}
\E \left(S^+(t) \right) = \left(n-A(0)\right) \pi^+(t) \leq n tp \etto = o(t).
\end{equation}
Using Markov's inequality, we deduce for any $\lambda >0$ that
\begin{equation*}
\lim_{t \to \infty} \P \left\{ \frac{S^+(t)}{t} > \lambda \right\} = 0.
\end{equation*}
Letting $t = (1+\epsilon) A(0)$ and $\lambda = \frac{\epsilon}{1+\epsilon}$, we derive that
\begin{equation*}
\lim_{n \to \infty} \P \left\{ S^+\left((1 + \epsilon)A(0)\right) -  \epsilon A(0) >  0\right\} = 0,
\end{equation*}
implying by domination (see Corollary \ref{cor:upperboundst}) that the process stops before time $t = (1 + \epsilon)A(0)$ for any positive $\epsilon$. Therefore, for $p = o\left( \frac{1}{n}\right)$ and any $A(0)$, we have
\begin{equation*}
\Ax = A(0) \ettop.
\end{equation*}
For $A(0) = O(1)$ then using equation \eqref{eq:s+ot}, we derive $\E\left( S^+\left(A(0)\right)\right) = o(1)$
so $\P \left\{ A^* > A(0) \right\} = o(1)$ and w.h.p, we have $\cA^* = \cA(0)$.
That proves Theorem \ref{theo:o1overn}.
\end{proof}
\section{The case $p = \frac{c}{n}$, proof of Theorem \ref{theo:covern}}\label{Sproof2}

\subsection{Approximation by a Poisson random variable}
In the case of $p = \frac{c}{n}$, it is handy for the computations to approximate the probability $\pi^+(t)$ using the approximation of a binomial by a Poisson random variable.

We use the standard approximation
\begin{equation}\label{eq:dtv}
d_{TV} \left( \Bin(t,p), \Po (tp)\right) < p,
\end{equation}
where $d_{TV}$ denotes the total variation distance. See Theorem 2:M in \cite{BHJ}.

\begin{rem}
The approximation \eqref{eq:dtv} implies that
\begin{equation}\label{eq:approxpoi}
\pi^+(t) =  \P  \left\{ \Po(tp) \geq \max \left( \Po \left((n-t-1)p\right); 1\right) \right\}+ O(p).
\end{equation}
\end{rem}
Indeed, we have using the independence of the links for disjoint sets that
\begin{multline*}
\pi^+(t) =\sum_{k=1}^{n-t-1} \P\left\{\Bin_i([1,t],p)  \geq k \right\} \P \left\{ \Bin (n-t-1,p) = k\right\} 
\\
+  \P\left\{\Bin_i([1,t],p)  \geq 1 \right\} \P \left\{ \Bin (n-t-1,p) = 0 \right\} .
\end{multline*}
We use the approximation by the corresponding Poisson probability to derive
\begin{multline}\label{eq:developoisson}
\pi^+(t) = \sum_{k=1}^{n-t-1}\left( \P \left\{ \Po(tp) \geq k \right\} + O(p) \right) \left( \P \left\{ \Po \left((n-t-1)p\right) = k\right\} +O(p)\right) 
\\
+ \left( \P\left\{\Po(tp) \geq 1 \right\} + O(p) \right)  \left( \P \left\{ \Po \left((n-t-1)p\right) = 0 \right\} + O(p) \right)
\end{multline}

The lower term in equation \eqref{eq:developoisson} is
$\P\left\{\Po(tp) \geq 1 \right\} \P \left\{ \Po \left((n-t-1)p\right) = 0 \right\} + O(p)$.

The upper term in equation \eqref{eq:developoisson} can be developed into
\begin{multline}\label{eq:popo}
\sum_{k=1}^{n-t-1} \P \left\{ \Po(tp) \geq k \right\}  \P \left\{ \Po \left((n-t-1)p\right) = k\right\}
\\
+ O(p) \sum_{k=1}^{n-t-1} \P \left\{ \Po \left((n-t-1)p\right) = k\right\}
+ O(p) \sum_{k=1}^{n-t-1} \P \left\{ \Po(tp) \geq k \right\} 
+ O(p^2) \sum_{k=1}^{n-t-1} 1.
\end{multline}
We bound the terms on the lower line of equation \eqref{eq:popo}.
For the first term, we use the bound $\sum_{k=1}^{n-t-1} \P \left\{ \Po \left((n-t-1)p\right) = k\right\} \leq 1$.

For the second term, we have $\sum_{k=1}^{n-t-1} \P \left\{ \Po(tp) \geq k \right\} \leq \E \left(\Po (pt)\right) = pt = O(1)$ since $t\leq n$ and $p=\frac{c}{n}$.

For the last term we obviously have $ \sum_{k=1}^{n-t-1} 1=n-t-1$.

Inserting these bounds into \eqref{eq:developoisson}, we derive equation \eqref{eq:approxpoi}.

Computations of the relation \eqref{eq:approxpoi} give
\begin{align}\label{eq:pitpoisson}
\pi^+(t) 
& =  \sum_{k=1}^{t} \frac{(pt)^k}{k!} e^{-pt}  \sum_{j=0}^{k}\frac{\left((n-t-1)p\right)^j}{j!} e^{-(n-t-1)p} +O(p)
\nonumber
\\
\pi^+(t) & = e^{-(n-1)p} \sum_{k=1}^{t}  \frac{(pt)^k}{k!}   \sum_{j=0}^{k} \frac{\left((n-t-1)p\right)^j}{j!} +O(p).
\end{align}
The random variables $\Bin_i([1,t],p) $ and $\Bin_i([t+1,n],p)$ determine the number of links a certain vertex has with two disjoint set of vertices. By independence of the connections, the random variables $\Bin_i([1,t],p) $ and $\Bin_i([t+1,n],p)$ are independent.
The random variables $ \Po \left((n-t-1)p\right)$ and $ \Po \left((n-t-1)p\right)$ associated with their respective binomials are independent as well.

\subsection{Subcritical case,  $p = \frac{c}{n}$ and $A(0) = o(n)$}
\begin{proof}[Proof of Theorem \ref{theo:covern} \ref{theo:coverni}]
We consider the case $p = \frac{c}{n}$ and $A(0) = o(n)$. We study the process of activation along time $t$.
Eventually, $t$ will be a multiple of $A(0)$ so we assume throughout the calculations that $t=o(n)$.
 
We split the probability $\pi^+(t)$ 
into two terms, $k=1$ and $k\geq 2$
\begin{multline}\label{eq:split}
\pi^+(t) = \P \left( \left\{ \Bin(t,p)  = 1 \right\} \cap \left\{ \Bin(n-t-1,p) \leq 1\right\} \right)
\\
+ \P \left( \left\{ \Bin(t,p)  \geq \Bin(n-t-1,p) \right\} \cap \left\{ \Bin(t,p)  \geq 2 \right\} \right) + O(p).
\end{multline}
Using the approximation \eqref{eq:pitpoisson}, 
we deduce for each term of \eqref{eq:split} that for $t=o(n)$
\begin{equation*}
\P \left( \left\{ \Bin(t,p)  \geq \Bin(n-t-1,p) \right\} \cap \left\{ \Bin(t,p)  \geq 2 \right\} \right)
= e^p e^{-np} O(p^2t^2) + O(p),
\end{equation*}
and
\begin{equation*}
\P \left( \left\{ \Bin(t,p)  = 1 \right\} \cap \left\{ \Bin(n-t-1,p) \leq 1\right\} \right)
= e^{p} e^{-np} pt \left( 1 + p(n-t-1)\right) + O(p).
\end{equation*}
Therefore, we have 
\begin{equation}\label{eq:boundbyexp}
\pi^+(t) = (1+pn) p e^{-np} t \etto + O(p).
\end{equation}
To prove that the process does not percolate, we use again that the random variable $S(t)$ is stochastically dominated by $S^+(t) \in \Bin\left( n-A(0), \pi^+(t)\right)$.

Recall $t = o(n)$ such that $pt = o(1)$ since $p = \frac{c}{n}$. 
Using the relation \eqref{eq:boundbyexp},
we bound the expectation of the random variable $S^+(t) \in \Bin \left( n-A(0), \pi^+(t)\right)$ by 
\begin{align*}
\E \left( S^+(t)\right) 
& = \left( n- A(0) \right) \pi^+ (t)
\\
& \leq n \pi^+ (t)
\\
& \leq (1+ pn) np e^{-np} t \etto +O(1)= g(c) t \etto.
\end{align*}
where $g(c) = (1+c) c e^{-c}$. Notice that the function $g(c)$ has a maximum 
$(2 + \sqrt{5})e^{-\frac{1+\sqrt{5}}{2}} < 0.84  < 1$ at $c = \frac{1+ \sqrt{5}}{2}$. Therefore, for small $\epsilon$, we will always have $g(c)(1+\epsilon) <1$.
We have for some $\epsilon>0$ and for sufficiently large $n$
\begin{align}\label{eq:84}
\Var \left( \Bin \left( n-A(0), \pi(t)\right) \right) 
& \leq \E \left( \Bin \left( n-A(0), \pi(t)\right) \right) 
\nonumber
\\
& \leq \E \left( \Bin \left( n-A(0), \pi^+(t)\right) \right)
 \leq g(c) t (1+\epsilon)
\end{align}
Under the same conditions as equation \eqref{eq:84}, the probability of survival is
\begin{align*}
\P\left\{ A^* > t \right\} 
& \leq \P \left\{ A(t) >t \right\} 
\\
& = \P \left\{ A(0) + S(t) > t \right\} = \P \left\{ S(t) >  t- A(0)\right\}
\\
& \leq \P \left\{S^+(t) > t - A(0)\right\}
\\
& \leq \P \left\{ S^+(t) - \E \left( S^+(t) \right) > t - A(0) - g(c) t (1+\epsilon) \right\} 
\\ 
& \leq \P \left\{ S^+(t) - \E \left( S^+(t) \right) > \left( 1- g(c) (1+\epsilon) \right) t -A(0) \right\}
\end{align*}
where the second inequality follows from the stochastic domination of Corollary \ref{cor:upperboundst} and the third inequality from \eqref{eq:84}.

Use Chebyshev's inequality with $t = \frac{1+\epsilon}{1-g(c)(1+\epsilon)} A(0)$. We find
\begin{align*}
\P \left\{ A(t) >t \right\} 
& \leq \frac{\Var \left( \Bin \left( n-A(0), \pi(t)\right) \right)}{\left( \epsilon  A(0)\right)^2} 
\\
& \leq \frac{\E \left( \Bin \left( n-A(0), \pi(t)\right)\right)}{\left( \epsilon  A(0)\right)^2}
\\
& \leq \frac{\E \left( \Bin \left( n-A(0), \pi^+(t)\right)\right)}{\left( \epsilon  A(0)\right)^2} 
\\
& \leq \frac{g(c) A(0)}{\left( \epsilon  A(0)\right)^2}
\\
& \leq \frac{g(c)}{\epsilon^2 A(0)} \to 0 \qquad \text{as } n \to \infty,
\end{align*}
if $A(0) \to \infty$ as $n \to \infty$.  That means
\begin{equation*}
\lim_{n \to \infty} \P \left\{ A^* > \frac{1+\epsilon}{1-g(c)(1+\epsilon)} A(0) \right\} = 0.
\end{equation*}
Since the variable $A^*$ is an monotone increasing in $A(0)$, by boundedness, we derive for $A(0) = O(1)$ that $A^* = o_p\left( w(n)\right)$ for any $w(n) \to \infty$.

That implies immediately that if $A(0) = o(n)$ then 
\begin{equation*}
A^* = o_p(n).
\end{equation*}

\end{proof}
\subsection{Approximation of $S^+(t) = \Bin\left( n-A(0),\pi^+(t)\right)$ by its mean}\label{Sapprox}
This part is necessary in the case of $p=\frac{c}{n}$ and $A(0) = \theta n$ because we approximate the sequence of random variables $S^+(t) \in \Bin\left( n-A(0), \pi^+(t)\right)$ by the expectation $\E\left( S^+(t)\right)$. 
The Glivenko-Cantelli lemma gives a uniform bound on the approximation. That gives us the stopping time for the process $A^+(t) = A(0) + S^+(t)$ which we will denote $T^+$ 
and then we derive an upper bound for $A^* = T$ (see equation \eqref{at}). 

The random variable $S^+(t)$ is a binomial distribution, so for every $t = t(n)$, we have
\begin{equation}\label{eq:preglivenko}
S^+(t) = \E\left(S^+(t)\right) +o_p(n) = (n-A(0)) \pi^+(t) + o_p(n)
\end{equation}
and by the Glivenko-Cantelli lemma \cite{K}, this holds uniformly so
\begin{equation}\label{eq:glivenko}
\sup_{t \geq 0} \Big| S^+(t) - \E(S^+(t)) \Big| =o_p(n).
\end{equation}
For the expected value of $S^+(t)$, we find, using the approximation of $\pi^+(t)$ in equation \eqref{eq:pitpoisson}
\begin{align*}
\E \left( S^+(t) \right) 
& = \left( n-A(0)\right) \pi^+(t)
\\
& = \left( 1 - \theta \right) n \pi^+(t)
\\
& = n \left( 1 - \theta \right) e^p e^{-c} \sum_{k=1}^{\lfloor xn \rfloor} \frac{(cx)^k}{k!} \sum_{j=0}^k \frac{\left((1-x)c\right)^j}{j!} + O(1).
\end{align*}
Consider now $\E \left( A^+(t)\right) -t$ with $t = xn$,
\begin{align*}
\E \left(A^+(t) \right) - t 
& = A(0) + \E \left( S^+(t) \right) -t
\\
& = \theta n - xn + n \left( 1 - \theta \right) e^p e^{-c} \sum_{k=1}^{\lfloor xn \rfloor} \frac{(cx)^k}{k!} \sum_{j=0}^k \frac{\left((1-x)c\right)^j}{j!} + O(1)
\\
& = n \left( \theta  - x +  \left( 1 - \theta \right) e^p e^{-c} \sum_{k=1}^{\lfloor xn \rfloor} \frac{(cx)^k}{k!} \sum_{j=0}^k \frac{\left((1-x)c\right)^j}{j!}\right)  + O(1).
\end{align*}
Let
\begin{equation*}
f_{c,\theta}(x) = \theta  - x +  \left( 1 - \theta \right) e^p e^{-c} \sum_{k=1}^{\lfloor xn \rfloor} \frac{(cx)^k}{k!} \sum_{j=0}^k \frac{\left((1-x)c\right)^j}{j!} ,
\end{equation*}
so that we have
\begin{equation}\label{eq:expect}
\E \left( A^+(t) \right) -t = n f_{c,\theta}(x) +O(1).
\end{equation}
An approximation of the stopping time of the process $A^+(t)$ denoted $T^+$ is given by
\begin{equation}\label{eq:defx0}
x_0 = \inf \{x \geq \theta,  f_{c,\theta}(x)< 0\}.
\end{equation}
This is the smallest root $x_0(c,\theta) \geq \theta$ of $f_{c,\theta}(x) =0$ for given $c$ and $\theta$ such that 
\begin{equation}\label{eq:conduit}
\begin{cases}
&  f_{c,\theta}(x) \geq 0 \quad \text{for } x \leq x_0
\\
\exists \upsilon > 0 \text{ such that} & f_{c,\theta}(x) < 0 \quad \text{for } x \in( x_0,x_0+\upsilon).
\end{cases}
\end{equation}
The condition \eqref{eq:conduit} is to ensure that the function $f_{c,\theta}(x)$ changes sign at $x_0$ and avoid points for which $f_{c,\theta}(x) =  f'_{c,\theta}(x) =0 $ (see remark \ref{rem:nasty}) so that $x_0$ is a double root with $f_{c,\theta}(x) \geq 0$ on a boundary of $x_0$.

Finally, notice that the function $f_{c,\theta}(x)$ is continuous on $[0,1]$ and positive for $x<x_0$.

We give in the following some basic properties to $f_{c,\theta}(x)$ that immediately translates to $\\E\left(S^+(t)\right)$ and further to the process $S^+(t)$ using either \eqref{eq:preglivenko} for concentration results point wise or the Glivenko-Cantelli Lemma for concentration results needed on an interval. The first proposition shows that in the case when $p = \frac{c}{n}$, the activation cannot spread to almost all the graph.

\begin{prop}\label{prop:existence}
Let $p = \frac{c}{n}$, $c>0$. For the process starting from time $A(0) = \theta n$, $\theta <1$, there exists a stopping time $T = A^* = \theta^* n + o_p(n)$ with $\theta^* \leq x_0 <1$.
\end{prop}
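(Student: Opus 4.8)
The plan is to combine the sandwich inequality $A(A(0)) \leq A^* = T \leq A(0) + S^+(T^+)$ with the Glivenko--Cantelli-type uniform approximation \eqref{eq:glivenko} to pin the stopping time of the dominating process $A^+(t) = A(0) + S^+(t)$, and then transfer the conclusion to $T$. First I would establish that $f_{c,\theta}$, as defined just before the statement, is a well-defined continuous function on $[0,1]$ (treating $\lfloor xn\rfloor$ as a parameter; the relevant limiting object is continuous) with $f_{c,\theta}(\theta) = (1-\theta)e^p e^{-c}\sum_{k\geq 1}\frac{(c\theta)^k}{k!}\sum_{j=0}^k \frac{((1-\theta)c)^j}{j!} > 0$, since $\theta < 1$ and $c>0$ make the double sum strictly positive. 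Hence $x_0 = \inf\{x \geq \theta : f_{c,\theta}(x) < 0\} > \theta$, and by continuity $f_{c,\theta}(x) \geq 0$ on $[\theta, x_0]$ with a genuine sign change at $x_0$ (this is exactly condition \eqref{eq:conduit}; the pathological case $f = f' = 0$ is excluded by the definition of $x_0$ as an infimum over a set where $f$ is \emph{strictly} negative).

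Next I would show $x_0 < 1$. Evaluating at $x = 1$: $f_{c,\theta}(1) = \theta - 1 + (1-\theta)e^p e^{-c}\sum_{k=1}^{n-?}\frac{c^k}{k!} \leq \theta - 1 + (1-\theta)e^p e^{-c}(e^c - 1) = (1-\theta)(e^p e^{-c}(e^c-1) - 1)$. Since $p = c/n \to 0$, for large $n$ we have $e^p \leq 1 + 2p$, so $e^p e^{-c}(e^c - 1) = e^p(1 - e^{-c}) < 1$ because $1 - e^{-c} < 1$ and $e^p$ is arbitrarily close to $1$; thus $f_{c,\theta}(1) < 0$ for $n$ large, giving $x_0 < 1$. (More carefully one checks $f_{c,\theta}$ is eventually negative strictly before $1$, which only strengthens the bound.)

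Then I would run the domination argument. By \eqref{eq:expect}, $\E(A^+(t)) - t = n f_{c,\theta}(x) + O(1)$ with $t = xn$, and by \eqref{eq:glivenko} the fluctuations of $S^+$ are $o_p(n)$ uniformly in $t$. Fix any $\eta > 0$. On the event where $\sup_t |S^+(t) - \E S^+(t)| \leq \eta n$ (which has probability $\to 1$), for $t = xn$ with $x \leq x_0 - \eta'$ (where $\eta'$ is chosen so that $f_{c,\theta} \geq \delta > 0$ on $[\theta, x_0-\eta']$, possible by continuity and positivity) we get $A^+(t) - t \geq n\delta - \eta n - O(1) > 0$ once $\eta < \delta$, so the process $A^+$ has not yet stopped; hence $T^+ \geq (x_0 - \eta')n$ w.h.p., but this lower bound is not what we need. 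For the \emph{upper} bound: for $t = xn$ with $x$ slightly above $x_0$, the strict sign change \eqref{eq:conduit} gives $f_{c,\theta}(x) \leq -\delta' < 0$ on some $[x_0 + \upsilon', x_0 + \upsilon'']$, so on the good event $A^+(t) - t \leq -n\delta' + \eta n + O(1) < 0$, which by \eqref{t2} forces $T^+ \leq (x_0 + \upsilon'')n$ w.h.p. Since $\upsilon''$ can be taken arbitrarily small and $T \leq A(0) + S^+(T^+) \leq T^+$-type domination gives $A^* = T \leq T^+$, we obtain $A^* \leq x_0 n + o_p(n)$. Combined with the trivial lower bound $A^* \geq A(0) = \theta n$ and monotonicity, $A^*/n$ is squeezed into $[\theta, x_0]$; setting $\theta^* n := T$ we get $A^* = \theta^* n + o_p(n)$ with $\theta^* \leq x_0 < 1$ (and $\theta^* \geq \theta$).

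The main obstacle I anticipate is the rigorous handling of $\lfloor xn \rfloor$ in the definition of $f_{c,\theta}$: strictly speaking $f_{c,\theta}$ depends on $n$, so ``$x_0 < 1$ with a limit'' (as flagged in the remark after Theorem~\ref{theo:covern}) requires showing the $n$-dependent roots converge, and that the sign-change structure \eqref{eq:conduit} is stable in $n$; one must argue that the truncated double sum is within $O(1/n)$ (uniformly on compact $x$-intervals bounded away from where the tail matters) of the clean analytic function $\theta - x + (1-\theta)e^{-c}\sum_{k\geq 1}\frac{(cx)^k}{k!}\sum_{j=0}^k\frac{((1-x)c)^j}{j!}$, and do the root analysis on that limit. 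The second delicate point is justifying the uniform approximation \eqref{eq:glivenko} for the \emph{non-stationary} array $S^+(t)$: since $S^+(t) \in \Bin(n-A(0), \pi^+(t))$ with $\pi^+(t)$ monotone in a controllable way, $t \mapsto S^+(t)$ can be coupled to a monotone process and Glivenko--Cantelli applies, but this coupling should be spelled out.
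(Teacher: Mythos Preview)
Your approach is essentially the same as the paper's: compute $f_{c,\theta}(1)<0$ to get $x_0<1$, use the sign change at $x_0$ together with the uniform approximation \eqref{eq:glivenko} to show $A^+(t)-t<0$ \whp{} for $t=(x_0+\gamma)n$, and then invoke the stochastic domination of Corollary~\ref{cor:upperboundst} to transfer the upper bound to $T=A^*$. The paper does exactly this, only more tersely; your extra material ($f_{c,\theta}(\theta)>0$ and the lower bound $T^+\ge(x_0-\eta')n$) is not needed for this proposition but appears in the paper as Proposition~\ref{prop:spread} and Remark~\ref{rem:allright} respectively.

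One small point to tighten: your line ``$T \le A(0)+S^+(T^+)\le T^+$-type domination gives $A^*=T\le T^+$'' is not quite the right formulation. Stochastic domination of $S(t)$ by $S^+(t)$ does not give a pathwise inequality $T\le T^+$; what it gives (and what the paper uses) is the pointwise bound $\P\{A^*>t\}\le\P\{A(t)>t\}\le\P\{S^+(t)>t-A(0)\}=\P\{A^+(t)>t\}$, which is enough since you have already shown the right-hand side is $o(1)$ at $t=(x_0+\gamma)n$. Your identification of the two technical soft spots (the $n$-dependence of $f_{c,\theta}$ via $\lfloor xn\rfloor$, and the justification of \eqref{eq:glivenko} for the non-i.i.d.\ array) is apt; the paper glosses over both.
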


\begin{proof}[Proof of proposition \ref{prop:existence}]
We have
\begin{align*}
f_{c,\theta}(1)
& = \theta -1 + (1 - \theta) e^p e^{-c}  \sum_{k=1}^{ n } \frac{(c)^k}{k!} \sum_{j=0}^k \frac{0^j}{j!}
\\
& \leq (1- \theta) \left( e^p e^{-c} \left( e^c -1 \right) - 1 \right).
\end{align*}
We have $e^p = e^{\frac{c}{n}} = 1+\frac{c}{n} \etto$. Therefore, for any $0<\epsilon <e^{-c} $, there exists $ n_{\epsilon}$ such that for any $n \geq n_{\epsilon}$
\begin{equation}\label{eq:<1}
 f_{c,\theta}(1) \leq  (1-\theta) \left((1 + \epsilon) e^{-c} \left( e^c -1\right) -1 \right) = (1-\theta) \left( \epsilon -(1+\epsilon) e^{-c} \right) < 0,
\end{equation}
and the inequality \eqref{eq:<1} holds for any $\theta < 1$.
Along with $f_{c,\theta}(0) = \theta > 0$, that implies that there is at least one solution $<1$ to the equation $f_{c,\theta}(x) =  0$.
Let $x_0$ be defined by \eqref{eq:defx0}.
Clearly, by \eqref{eq:conduit}, for $x = x_0 + \gamma$, with $\gamma < \upsilon$ we have $f_{c,\theta} = - \lambda <0$.
That means for $t=xn$ that
\begin{equation*}
\E \left( A^+(t)\right) - t = A(0) + \E \left( S^+(t)\right) - t = -\lambda n.
\end{equation*}
Using equation \eqref{eq:glivenko}, we derive that
\begin{align*}
A^+(t) - t & = A^+(t) - \E \left( A^+(t)\right) +\E \left( A^+(t)\right) - t
\\
& = o_p(n) - \lambda n.
\end{align*}
Therefore, for $t =xn$, $x = x_0 + \gamma$
\begin{equation*}
\lim_{n \to \infty} \P \left\{ A^+(t) - t > 0 \right\} = 0
\end{equation*}
and this holds for any $\gamma < \upsilon$ thus we have
\begin{equation*}
T^+ \leq x_0 n + o_p(n) \Leftrightarrow T^+ = \theta^+ n \text{ with } \theta^+ \leq x_0.
\end{equation*}
Using the boundedness result of Corollary \ref{cor:upperboundst}, it follows that the process $\left(A(t) \right)_{t \leq n}$ has a stopping time $T = \Ax = \theta^* n +o_p(n)$ with $\theta^*\leq \theta^+ \leq x_0 <1$.
\end{proof}
\begin{rem}\label{rem:nasty}
If we have
$f_{c, \theta} (x_1) = 0$ but  $f_{c, \theta} (x)$ 
does not change sign around $x_1$, as it is required in \eqref{eq:conduit}, then, using simply the Glivenko-Cantelli Lemma, see relation \eqref{eq:glivenko}, we cannot conclude that the process of activation stops or not.
We may have a similar behaviour as in Theorem 5.5 of \cite{JLTV}.
\end{rem}
\begin{rem}\label{rem:allright}
In the case when $x_0$ is the smallest root then, we have $f_{c,\theta}(x) > 0$ on $(0,x_0)$ and one can prove using the Glivenko-Cantelli Lemma that w.h.p $A(t) - t >0$ for all $t=xn$ with $x <x_0$. Hence, one can derive that $T^+ =  x_0 n + o_p(n)$ and $\theta^+ = x_0$.
\end{rem}
\begin{prop}\label{prop:spread}
Let $p = \frac{c}{n}$, $c>0$ and $A(0) = \theta n$, $0< \theta <1$ then the activation spreads to a significantly larger part of the graph
\begin{equation}\label{eq:spread}
A^* = \theta^* n \quad \text{with} \quad \theta^* > \theta.
\end{equation}
\end{prop}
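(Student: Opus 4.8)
The plan is to show that the active set already grows by a positive fraction of $n$ during the exploration of $\cao$ itself, and then to feed this into Proposition \ref{prop:existence}. Since $\Ax\ge A(A(0))=A(0)+S(A(0))$ by \eqref{eq:lowerbound} and \eqref{as}, while $\Ax=\theta^* n+\op(n)$ with $\theta^*\le x_0<1$ by Proposition \ref{prop:existence}, it suffices to prove that $S(A(0))\ge\eta n+\op(n)$ for some constant $\eta=\eta(c,\theta)>0$: this immediately forces $\theta^*\ge\theta+\eta>\theta$.

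First I would pin down the law of $S(A(0))$. On the one hand, Corollary \ref{cor:upperboundst} says that $S(A(0))$ is stochastically dominated by $\Bin(n-A(0),\pi^+(A(0)))$. On the other hand $S(A(0))=\sum_{i\notin\cao}I_i(A(0))$ with each $I_i(A(0))\in\Be(\pi(A(0)))$, so $\E\,S(A(0))=(n-A(0))\pi(A(0))=(n-A(0))\pi^+(A(0))$ by \eqref{eq:pi=pi+}, \ie\ exactly the mean of the dominating binomial. For nonnegative integer-valued variables, stochastic domination together with equality of (finite) means forces equality in distribution, because $\E Y-\E X=\sum_{k\ge1}\bigl(\P\{Y\ge k\}-\P\{X\ge k\}\bigr)$ is then a sum of nonnegative terms adding up to zero. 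Hence $S(A(0))$ has precisely the distribution $\Bin(n-A(0),\pi^+(A(0)))$.

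Next I would estimate that mean. Substituting $t=A(0)=\theta n$ into the Poisson approximation \eqref{eq:pitpoisson} and comparing with the definition of $f_{c,\theta}$ (whose linear part $\theta-x$ vanishes at $x=\theta$) gives $(n-A(0))\pi^+(A(0))=n f_{c,\theta}(\theta)+O(1)$, where
\[
f_{c,\theta}(\theta)=(1-\theta)\,e^{p}e^{-c}\sum_{k=1}^{\lfloor\theta n\rfloor}\frac{(c\theta)^k}{k!}\sum_{j=0}^{k}\frac{((1-\theta)c)^j}{j!}
\]
is a sum of strictly positive terms; retaining only $k=1$ and using $e^{p}\ge1$ bounds it below by the $n$-free constant $\eta:=(1-\theta)\,e^{-c}\,c\theta\,(1+(1-\theta)c)>0$. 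Since $S(A(0))$ is then a binomial with mean at least $\eta n$ and variance $O(n)$, Chebyshev's inequality gives $S(A(0))=n f_{c,\theta}(\theta)+\op(n)$, and in particular $S(A(0))\ge\eta n+\op(n)$.

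Assembling the pieces, $\Ax\ge A(A(0))=\bigl(\theta+f_{c,\theta}(\theta)\bigr)n+\op(n)\ge(\theta+\eta)n+\op(n)$, and comparing with $\Ax=\theta^* n+\op(n)$ from Proposition \ref{prop:existence} yields $\theta^*\ge\theta+\eta>\theta$, which is \eqref{eq:spread}. The one genuinely delicate point is the \emph{lower} bound on $S(A(0))$: the machinery of Lemma \ref{lem:larger} and Corollary \ref{cor:upperboundst} produces only \emph{upper} bounds on $S(t)$, and the device that rescues us is that at the special time $t=A(0)$ the dominating binomial has the \emph{same} mean as $S(A(0))$, which promotes the one-sided domination to a distributional identity. (Alternatively, one could bound $\Var\,S(A(0))=O(n)$ by hand, using that for $t\le A(0)$ the exploration order inside $\cao$ is deterministic, so the activation indicators of two distinct vertices are conditionally independent given the status of their unique shared potential edge, whence each covariance is $O(p)$.)
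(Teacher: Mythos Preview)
Your proof is correct and follows the same route as the paper's: show $A^*\ge A(A(0))=(\theta+f_{c,\theta}(\theta))\,n+o_p(n)$ with $f_{c,\theta}(\theta)>0$, then invoke Proposition~\ref{prop:existence}. Your observation that stochastic domination of $S(A(0))$ by $\Bin(n-A(0),\pi^+(A(0)))$ together with equality of means forces equality in distribution is a tidy justification of the concentration step $A(A(0))=\E A(A(0))+o_p(n)$, which the paper simply asserts.
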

\begin{proof}[Proof of proposition \ref{prop:spread}]
Let us first remark that for $x< \theta$ we have $f_{c,\theta} (x) >0$. Indeed as a first approximation, we have
\begin{equation*}
\frac{\E\left( A(t)\right)}{n} - x \geq \frac{A(0)}{n} - x = \theta - x > 0.
\end{equation*}
Secondly, we use the fact that $A\left( A(0) \right) \leq A^*$ with
\begin{align*}
A\left(A(0)\right)
& = \E \left(A\left(A(0)\right)\right) + o_p(n) \\
& = \E \left(A^+\left(A(0)\right)\right) + o_p(n)\\
& = f_{c,\theta} (\theta) n + A(0)+ o_p(n),
\end{align*}
where the second inequality follows from the fact that $\pi(t) = \pi^+(t)$ for $t \leq A(0) = \theta n$ and the third equality follows from \eqref{eq:expect}.
Let us compute $f_{c,\theta} (\theta)$,
\begin{align*}
f_{c,\theta}(\theta)
& = \theta - \theta + (1 - \theta) e^p e^{-c} \sum_{k=1}^{\theta n} \frac{(c\theta) ^k}{k!} \sum_{j=0}^k \frac{\left( c (1-\theta)\right)^j}{j!}
\\
& = (1 - \theta) e^p e^{-c} \sum_{k=1}^{\theta n} \frac{(c\theta) ^k}{k!} \sum_{j=0}^k \frac{\left( c (1-\theta)\right)^j}{j!} >0.
\end{align*}
That implies that there exists $\lambda >0$ such that  for $n$ large enough $\frac{A\left(A(0)\right)}{n} =  \theta + f_{c,\theta}(\theta) +o_p(1) \geq \left( \theta + \lambda \right)$.
Thus $A^* = \theta^* n + o_p(n)$ with $\theta^* \geq \theta_1 > \theta$.
\end{proof}
We have the necessary results to prove Theorem  \ref{theo:covern}.
\begin{proof}[Proof of Theorem \ref{theo:covern}]
Propositions \ref{prop:existence} implies that $\theta^*<1$ w.h.p, and \ref{prop:spread} implies that $A^* = \theta^* n + o_p(n)$ with $\theta^*> \theta$.

Moreover, from Proposition \ref{prop:existence} , we derive that $\theta^* \leq x_0$ with $x_0$ defined by \eqref{eq:defx0theo}.
That proves Theorem  \ref{theo:covern} \ref{theo:covernii}.
\end{proof}
We studied in the \refS{Sproof1} the case $p = o(\frac{1}{n})$. It is possible to recover some of these results using 
\begin{prop}\label{prop:extra}
\begin{align*}
\lim_{c \to 0} f_{c,\theta}(x) 
& = \lim_{c \to 0}  \theta - x + (1- \theta) e^p e^{-c} \sum_{k=1}^{\lfloor xn \rfloor} \frac{(cx) ^k}{k!} \sum_{j=0}^k \frac{\left( c (1-x)\right)^j}{j!}
\\
& = \theta -x,
\end{align*}
thus we have $f_{c,\theta}(x) <0$ for $x > \theta$.
\end{prop}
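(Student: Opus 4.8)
The plan is to pass to the limit $c\to 0$ directly inside the closed form of $f_{c,\theta}(x)$, using a summation-free upper bound for the double sum that is uniform in $n$. First I would dispose of the prefactors: since here $p=c/n$, we have $e^p=e^{c/n}\to 1$ and $e^{-c}\to 1$ as $c\to 0$, hence $(1-\theta)e^pe^{-c}\to 1-\theta$.

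Next I would bound the double sum
\begin{equation*}
D(c)\=\sum_{k=1}^{\lfloor xn\rfloor}\frac{(cx)^k}{k!}\sum_{j=0}^{k}\frac{\bigl(c(1-x)\bigr)^j}{j!}.
\end{equation*}
Since all terms are nonnegative, extending both summation ranges to infinity gives
\begin{equation*}
0\le D(c)\le\Bigl(\sum_{k=1}^{\infty}\frac{(cx)^k}{k!}\Bigr)\Bigl(\sum_{j=0}^{\infty}\frac{\bigl(c(1-x)\bigr)^j}{j!}\Bigr)=\bigl(e^{cx}-1\bigr)e^{c(1-x)}\le\bigl(e^{c}-1\bigr)e^{c},
\end{equation*}
where the last step uses $0\le x\le 1$; the right-hand side tends to $0$ as $c\to 0$, uniformly in $n$ and in $x\in[0,1]$. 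Combining this with the previous step,
\begin{equation*}
\lim_{c\to 0}f_{c,\theta}(x)=\theta-x+(1-\theta)\cdot 1\cdot 0=\theta-x.
\end{equation*}

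Finally, for any fixed $x>\theta$ the limit equals $\theta-x<0$, so $f_{c,\theta}(x)<0$ once $c$ is small enough (equivalently, in the regime $np\to 0$); correspondingly, the smallest root $x_0$ of \eqref{eq:root} tends to $\theta$ as $c\to 0$, which is consistent with \refT{theo:o1overn}. I do not expect a genuine obstacle here: the only delicate point is that both the summation bound $\lfloor xn\rfloor$ and the factor $e^{c/n}$ carry $n$-dependence, so one must not interchange the limits carelessly — but the geometric-type bound $D(c)\le (e^c-1)e^c$ is valid for every $n$ at once, which makes the interchange legitimate.
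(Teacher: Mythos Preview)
Your proof is correct and follows the same direct computation the paper uses; in fact the paper gives no separate proof of this proposition, treating the limit as immediate from the defining expression of $f_{c,\theta}$. Your uniform bound $D(c)\le(e^{cx}-1)e^{c(1-x)}\le(e^c-1)e^c$ makes the passage to the limit rigorous and cleanly handles the $n$-dependence you rightly flag.
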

One can deduce from Proposition \ref{prop:extra} using the same technique as in the proofs of the proposition \ref{prop:existence} and \ref{prop:spread} that for $p = o \left( \frac{1}{n}\right)$ and $A(0) = \theta n$ then $A^* = A(0) \ettop$ which was proved in \refS{Sproof1}.
\section{The case $ \frac{1}{n} \ll p \leq 1$, proof of Theorem \ref{theo:p>>1overn}}\label{Sproof3}
\subsection{The sub case $A(0) = o(n)$,  proof of  \ref{theo:p>>1overni}}
In the following, we prove that if $A(0)=o(n)$ and $p \gg \frac{1}{n}$ the process is subcritical and thus the final set of active vertices has a size $A^* = o_p(n)$.
\begin{proof}[Proof of Theorem \ref{theo:p>>1overn} \ref{theo:p>>1overni}]
We will consider $t=o(n)$ along the proof.
As in the proof of Theorem \ref{theo:o1overn}, we will use the fact that  $A(t)$ is stochastically dominated by $A^+(t)$ and we will show that for any $\epsilon > 0 $, $A^+\left((1+\epsilon)A(0)\right) - (1+\epsilon)A(0) \leq 0$ w.h.p.

We recall that the process $A^+(t)$ is defined by $A^+(t) = A(0) + S^+(t)$ where $S^+(t) \in \Bin\left( n-A(0), \pi^+(t)\right)$ and $\pi^+(t) = \P\left\{ \Bin(t,p)  \geq \max\left( \Bin(t,p),1 \right)\right\}$.
We start by splitting $\pi^+(t)$ in two
\begin{align}\label{eq:debase}
\pi^{+}(t) 
 & = \P \Biggl( \left\{ \Bin(t,p)  \geq \max \left(\Bin \left(n-t-1,p\right);1\right) \right\} 
 \\
&\qquad \qquad  \qquad \cap \left( \left\{ \Bin(t,p)  \geq \frac{1}{4}np\right\} \cup \left\{ \Bin(t,p)  \leq \frac{1}{4}np\right\} \right) \Biggr)
\nonumber
\\
& = \P \Biggl( \left\{ \Bin(t,p)  \geq \max \left(\Bin \left(n-t-1,p\right);1\right) \right\}
\\
 &\qquad \qquad  \qquad \cap \left( \left\{ \Bin(t,p)  \geq \frac{1}{4}np\right\} \cup \left\{ \Bin(n-t-1,p)  \leq \frac{1}{4}np\right\} \right) \Biggr)
\nonumber
\\
& \leq  \P \left( \left\{ \Bin(t,p)  \geq \max \left(\Bin \left(n-t-1,p\right);1\right) \right\} \cap \left\{ \Bin(t,p)  \geq \frac{1}{4}np\right\} \right)
\nonumber
\\
& \qquad + \P \left( \left\{ \Bin(t,p)  \geq \max \left(\Bin \left(n-t-1,p\right);1\right) \right\} \cap \left\{ \Bin(n-t-1,p)  \leq \frac{1}{4}np\right\} \right)
\\
& \leq  \P \left( \left\{ \Bin(t,p)  \geq \max \left(\Bin \left(n-t-1,p\right);1\right) \right\} \cap \left\{ \Bin(t,p)  \geq \frac{1}{4}np\right\} \right)
\nonumber
\end{align}
We use Theorem 2.1 from \cite{JLR} which we recall here. Let $X$ be a binomial random variable
then for $z>0$
\begin{equation}\label{eq:largedev1}
\P \left\{ X \geq \E X + z\right\} \leq \exp \left( - \frac{z^2}{2 \left(\E X + \frac{z}{3} \right)} \right)  
\end{equation}
and
\begin{equation}\label{eq:largedev2}
\P \left\{ X \leq \E X - z \right\} \leq \exp \left( - \frac{z^2}{2 \E X } \right) .
\end{equation}
We have
\begin{align*}
\P\left\{ \Bin(t,p)  \geq \frac{1}{4}np\right\}
& = \P\left\{ \Bin(t,p)  \geq tp + \left( \frac{1}{4}np - tp \right)\right\}
\\
& \leq \exp \left( - \frac{\left(\frac{1}{4}np - tp\right)^2}{2 \left( tp + \frac{1}{3}\left( \frac{1}{4}np - tp \right) \right)}  \right).
\end{align*}
Use that $t = o(n)$ to derive
\begin{align}\label{eq:bound1}
\P\left\{ \Bin(t,p)  \geq \frac{1}{4}np\right\} 
& \leq \exp \left( - \frac{1}{3}np \right).
\end{align}
We also have
\begin{align*}
\P \left\{ \Bin(n-t-1,p) \leq \frac{1}{4}np\right\} 
& = \P \left\{ \Bin(n-t-1,p) \leq (n-t-1)p - \left( (n-t-1)p - \frac{1}{4}np \right) \right\} 
\\
& \leq \exp \left( - \frac{\left( (n-t-1)p - \frac{1}{4}np \right)^2}{2 (n-t-1)p} \right).
\end{align*}
Use that $t = o(n)$ to derive
\begin{align}\label{eq:bound2}
\P \left\{ \Bin(n-t-1,p) \leq \frac{1}{4}np\right\} 
& \leq \exp \left(- \frac{1}{3} np \right).
\end{align}
The bounds \eqref{eq:bound1} and \eqref{eq:bound2} imply
\begin{equation}\label{eq:boundtotal}
\pi^+(t) \leq  2 \exp \left(- \frac{1}{3} np \right). 
\end{equation}

Since $A(0) = o(n)$ and $p \gg \frac{1}{n}$, we consider $t = o(n)$ we find, using Corollary \ref{cor:upperboundst} and Markov's inequality, that
\begin{align}\label{eq:procedure}
\P \{A^* >t\}
&\leq \P \left\{ A^+( t ) > t \right\}
\nonumber
\\
& =  \P \left\{ S^+(t) > t-A(0) \right\}
\nonumber
\\
& = \P \left\{ \Bin\left(n-A(0), \pi^+(t)\right) >  t-A(0)   \right\}
\nonumber
\\
& \leq \frac{((n-A(0))\pi^+(t)}{t- A(0)}
\nonumber
\\
& \leq \frac{2n \exp\left(-\frac{1}{3}np\right)}{t- A(0)}.
\end{align}
We consider two cases
\begin{enumerate}
\item If 
\begin{equation}\label{eq:a0large}
A(0) \gg n \exp \left(-\frac{1}{3} np \right),
\end{equation}
then take $t = (1+\epsilon)A(0)$  and use Corollary \ref{cor:upperboundst} to derive
\begin{align*}
\P \left\{ A^* > (1+ \epsilon) A(0)\right\} 
& \leq \P \left\{ A^+\left( (1+ \epsilon) A(0)\right) > (1+ \epsilon) A(0) \right\}
\\
& \leq \frac{2n \exp \left( -\frac{1}{3} np \right)}{\epsilon A(0)} \to 0 \quad \text{ as } n \to \infty
\end{align*}
That implies that  $\Ax =A(0) \ettop = o_p(n)$.

\item 
%
In this case $A(0) \leq K n \exp(-\frac{1}{3} np)$ for a constant $K$. For any $\alpha > 0$ choose a constant $C_\alpha > \frac{2+K\alpha}{\alpha}$. Then
\begin {align}
 \P\left(A^\ast > C_\alpha n \exp(-\frac{1}{3} np)\right) &= \P\left(A(C_\alpha n \exp(-\frac{1}{3} np)) > C_\alpha n \exp(-\frac{1}{3} np)\right) \\
 &\leq \frac{2 n \exp(-\frac{1}{3} np)}{C_\alpha n \exp(-\frac{1}{3} np) - A(0)} \\
 &\leq \frac{2}{C_\alpha - K} < \alpha.
\end {align}
Thus, $A^\ast = O_p(n \exp(-\frac{1}{3} np))$. We recall that since $p\gg \frac{1}{n}$, $np \to \infty$ as $n\to \infty$. Therefore, we have shown that in this case the activation does not spread to a finite proportion of the graph.

\end{enumerate}
That proves Theorem \ref{theo:p>>1overn} \ref{theo:p>>1overni}.
\end{proof}
\begin{proof}[Proof of Theorem \ref{theo:p>>1overn} \ref{theo:p>>1overnii}]
We consider the case $A(0) = \theta n$, $\theta < \frac{1}{2}$. We use that $A(t)$ is stochastically dominated by $A^+(t)$ and prove that $\P \left\{ A^+\left( (1+ \varepsilon)A(0)\right) > (1+\varepsilon) A(0) \right\} = o(1)$.

Let $t=xn$, we have similarly to 
 \eqref{eq:boundtotal}
 \begin{align}
 \pi^+(t) 
 & \leq \P \left\{ \Bin\left( n-t-1,p\right) \leq \frac{1}{2} np \right\} + \P \left\{ \Bin\left( t,p\right) \geq \frac{1}{2} np \right\}
 \nonumber
 \\
 & \leq \P \left\{ \Bin\left( n-t-1,p\right) \leq (n-t-1)p - \left( (n-t-1)p - \frac{1}{2} n p \right)\right\} 
 \nonumber
 \\
 & \qquad
+ \P \left\{ \Bin\left( t,p\right) \geq tp + \left( \frac{1}{2} n - t\right) p \right\}.
 \end{align}
 Using the inequalities \eqref{eq:largedev1} and \eqref{eq:largedev2}, we bound
 \begin{equation*}
 \pi^+(t) 
  \leq \exp\left( - \frac{\left( (n-t-1)p - \frac{1}{2}np\right)^2}{2(n-t-1)p} \right)
 + \exp\left( - \frac{\left(\left(\frac{1}{2}n -t\right)p \right)^2}{2 \left( tp + \frac{\frac{1}{2}n-t}{3}p\right)} \right).
 \end{equation*}
 For any small $\lambda >0$ then for $n$ sufficiently large, we have
 \begin{equation*}
 \pi^+(t)  \leq \exp \left( -  (1-\epsilon)\frac{\left( \frac{1}{2} n-t \right)^2}{2n} p \right) + 
 \exp \left( - \frac{\left( \frac{1}{2} n-t \right)^2}{2n} p \right) .
 \end{equation*}
Let $\omega(n) \to \infty$. Then we have uniformly for any  $t< \frac{1}{2}n - \sqrt{\frac{n}{p}}\omega(n)$, $\pi^+(t) = o(1)$ and more precisely, we have
\begin{equation*}
\E \left( S^+(t)\right) = \E \left( \Bin(n-A(0), \pi^+(t)) \right) 
= o(n).
\end{equation*}
We repeat the same procedure as in equation \eqref{eq:procedure} to derive that for any $0 < \epsilon <\frac{1}{2} - \theta$
\begin{align*}
\P \left\{ A^* > (1+\varepsilon) A(0)  \right\} \leq 
\P \left\{ A^+\left( (1+ \varepsilon)A(0)\right) > (1+\varepsilon) A(0) \right\} = o(1).
\end{align*}
By corollary \ref{cor:upperboundst}, we have that
 if $A(0) = \theta n$, $\theta <\frac{1}{2}$ and  $\frac{1}{n} \ll p \ll 1$ then 
 \begin{equation*}
 A^* = A(0) + o_p(n).
 \end{equation*}
That proves Theorem \ref{theo:p>>1overn} \ref{theo:p>>1overnii}.
\end{proof}
\begin{proof}[Proof of Theorem \ref{theo:p>>1overn} \ref{theo:p>>1overniii}]
In this proof, we show that after exploring the $A(0)$ vertices initially set as active, the set of vertices $\cR (t)= V \setminus \cA(t)$ has w.h.p. a size of order $o(n)$. Let us recall that $|\cR(t)| = R(t) = \sum_{i=1}^{n-A(0)} K_i(t)$ (see equation \eqref{eq:defrt}), where $K_i(t) \in \Be \left( \delta(t)\right)$ with $\delta (t) = 1 - \pi(t)$.
We consider the case $A(0)= \frac{1}{2} n + \omega (n) \sqrt{\frac{n}{p}}$. Recall that for $t \leq A(0)$ then $\pi(t) = \P \left\{ \Bin_i\left([1,t],p \right) \geq \max \left(\Bin_i([t+1,n],p),1 \right)  \right\} = \pi^+(t)$, where the random variables $ \Bin_i\left([1,t],p \right)$ and $\Bin_i([t+1,n],p)$ are independent as they represent links to disjoint set of vertices.
Let $\frac{1}{2}n< t \leq A(0)$ then the probability that a vertex of $V \setminus \cA(0)$ remains inactive at time $t$ is bounded by 
\begin{align*}
\delta(t) = 1- \pi(t)
& = \P \left( \left\{  \Bin_i([1,t],p)  < \Bin_i([t+1,n],p) \right\} \cup  \left\{ \Bin_i([1,t],p) = 0 \right\} \right)
\\
& \leq \P \left\{ \Bin_i([1,t],p)  \leq \Bin_i([t+1,n],p) \right\}
\\
& \leq \P \left\{ \Bin_i([1,t],p)  \leq \frac{1}{2}np\right\} + \P \left\{\Bin_i([t+1,n],p) \geq \frac{1}{2}np \right\}
\\
& \leq \P \left\{ \Bin_i([1,t],p)  \leq tp - \left(tp -  \frac{1}{2}np \right)\right\} 
\\
& \qquad \qquad+ \P \left\{\Bin_i([t+1,n],p) \geq (n-t-1)p + \left( \frac{1}{2}np - (n-t-1)p\right)\right\}.
\end{align*}
 Using the inequalities \eqref{eq:largedev1} and \eqref{eq:largedev2}, we bound
 \begin{align}\label{eq:borndeltat}
 \delta(t) 
& \leq  \exp \left( - \frac{\left(tp- \frac{1}{2}np\right)^2}{2tp}\right) + \exp \left( - \frac{\left(\frac{1}{2}np -(n-t-1)p \right)^2}{2 \left( (n-t-1)p + \frac{\frac{1}{2}np - (n-t-1)p}{3}\right)} \right)
\\
& \leq 2 \exp \left( - \frac{\left( \frac{1}{2}n-t\right)^2}{n} p \right) .
\end{align}
Use the bound \eqref{eq:borndeltat} for $t = \frac{1}{2}n + \sqrt{\frac{n}{p}} \omega (n)$ where $\lim_{n \to \infty} \omega(n) = + \infty$
\begin{equation}
\delta (t) \leq 2 \exp (-\frac{1}{2} \omega^2(n)).
\end{equation}
Therefore, we can bound the expectation of $R(t)$ by
\begin{align}
\E \left( R(t) \right) 
& = \left( n - A(0) \right) \delta(t)
\nonumber
\\
& \leq 2 n \exp \left(-\frac{1}{2} \omega^2(n)\right).
\end{align}
Therefore, we have
\begin{equation}
\E \left( R\left( A(0)\right) \right)  = o(n).
\end{equation}
For $t=A(0)$, we have $R(t) = \op(n)$ and therefore since $A^* \geq A\left( A(0) \right)$
\begin{equation*}
A^* = n - o_p(n).
\end{equation*}
\end{proof}
\section{Conclusion}
In the article, we treated the problem of majority bootstrap percolation on the random graph $\gnp$.
We showed that the process is always subcritical in the case $p=o\left(\frac{1}{n}\right)$.

For a given $p \gg \frac{1}{n}$, we could determine in Theorem \ref{theo:p>>1overn}, the threshold for majority bootstrap percolation, $A_c = \theta n \ettop$ with $\theta =\frac{1}{2}$. 

The upper bound for $A_c$ is actually sharper. We have that if
\begin{equation}
\lim_{n \to \infty} \frac{A(0) -\frac{1}{2}n}{\sqrt{\frac{n}{p}}} = + \infty,
\end{equation}
then
\begin{equation}
A^* = n - o_p (n).
\end{equation}
We believe that $\sqrt{\frac{n}{p}}$ is the right range for the phase transition around the value $A_c=\frac{1}{2}n$. 

Our computation of the lower bound only used that the variable $S(t)$ was stochastically dominated by a random variable $S^+(t)$. A better knowledge of the probability of receiving a mark at time $s$, denoted $p_s$ would bring better results in that direction. 
In order to perform a better lower bound, one needs to consider the behaviour of the process after the round of activation from the vertices of $\cA(0)$ and therefore introduce computations using $p_s$.

It is an open problem whether for $A(0) = \frac{1}{2}n + x \sqrt{\frac{n}{p}}$ for some $-\infty<x<+\infty$ then the graph percolates with a positive probability $\phi$ and with a positive probability $1-\phi$, we have $A^* \leq \frac{1}{2} n \etto$. Gaussian limits of the probability for $\cA(0)$ to almost percolate have been derived in the case of classical bootstrap percolation on $\gnp$ by Janson et al. in \cite{JLTV}. Their proof of Theorem 3.6 in \cite{JLTV} might be adapted to the setting of majority bootstrap percolation.

We showed also that the case $p=\frac{c}{n}$ has a specific behaviour where the activation spreads to a larger part of the graph but does not spread to almost all the graph. We could not determine the exact size of the final set of active vertices $| \cA^*| = A^*$.
A sharp estimate of the probability of receiving a mark at time $s$ denoted $p_s$ is necessary in this case too.
Moreover, a study of the function $f_{c,\theta}(x)$ which gave an approximation of $\frac{A(xn)-xn}{n}$ might show for different values of $c$ and $\theta$, various number of roots and the appearance of a double root for some critical values $\theta(c)$ for a given $c = pn$. Such a behaviour has already been noticed and treated on classical bootstrap percolation on the random graph $\gnp$ in \cite{JLTV}.

Finally, our proof of Theorem \ref{theo:p>>1overn} \ref{theo:p>>1overniii} shows that, 
under the condition of the theorem, the activation spreads to almost all the graph in only 1 generation. However, the total number of generations is not determined here.

\section*{Acknowledgements}
The authors thank Svante Janson for helpful comments.


\end{document}

\bibitem{AizenmanL}
\textsc{Aizenman, M.} and  \textsc{Lebowitz, J. L.} (1988).
Metastability effects in bootstrap percolation.
\textit{J. Phys. A} \textbf{21}, no. 19, 3801--3813.
\MR{0968311}

\bibitem[Amini(2010)]{Amini}
 \textsc{Amini, H.} (2010).
Bootstrap percolation and diffusion in random graphs with given vertex
degrees.
\textit{Electron. J. Combin.} \vol{17}, R25.
\MR{2595485}

\bibitem[von Bahr and Martin-L\"of(1980)]{vBahrML}
\textsc{von Bahr, B.} and \textsc{ Martin-L\"of, A.} (1980).
Threshold limit theorems for some epidemic processes.
\textit{Adv. Appl. Probab.} \vol{12} \no2, 319--349.
\MR{0569431}

\bibitem[Ball and Britton(2005)]{BallBritton}
\textsc{Ball, F.} and  \textsc{Britton, T.} (2005).
An epidemic model with exposure-dependent severities.
\textit{J. Appl. Probab.}  \vol{42},  no. 4, 932--949.
\MR{2203813}

\bibitem[Ball and Britton(2009)]{BallBritton09}
\textsc{Ball, F.} and  \textsc{Britton, T.} (2009).
An epidemic model with infector and exposure dependent severity.
\textit{Math. Biosci.}  \vol{218},  no. 2, 105--120.
\MR{2513676}

\bibitem[Balogh and Bollob{\'a}s(2006)]{BaloghBollobas}
\textsc{Balogh, J.}  and  \textsc{Bollob{\'a}s, B.} (2006).  
Bootstrap percolation on the hypercube.
\PTRF  \textbf{134},  no. 4, 624--648.
\MR{2214907}

\bibitem{BaloghBD-CM}
\textsc{Balogh, J.}, \textsc{Bollob{\'a}s, B.} \textsc{Duminil-Copin,
  H.} and \textsc{Morris, R.} (2010).
 The sharp threshold for bootstrap percolation
in all dimensions.
\textit{Trans. Amer. Math. Soc.}, to appear.
arXiv:1010.3326v2

\bibitem{BBM}
\textsc{Balogh, J.}, \textsc{Bollob{\'a}s, B.} and \textsc{Morris, R.}
(2009). Majority bootstrap percolation
on the hypercube.
\textit{Combin. Probab. Computing} \textbf{18}, 17--51.
\MR{2497373}

\bibitem{BaloghBM2}
\textsc{Balogh, J.}, \textsc{Bollob{\'a}s, B.} and \textsc{Morris, R.}
(2009).
Bootstrap percolation in three dimensions. \textit{Ann. Probab.} \textbf{37},
1329--1380.
\MR{2546747}

\bibitem{BaloghBM}
\textsc{Balogh, J.}, \textsc{Bollob{\'a}s, B.} and \textsc{Morris, R.}
(2010). Bootstrap
percolation in high dimensions. \textit{Combin. Probab. Computing} \textbf{19}, 643--692.
\MR{2726074}

\bibitem{BaloghBM3}
\textsc{Balogh, J.}, \textsc{Bollob{\'a}s, B.} and \textsc{Morris, R.}
(2011).
Graph bootstrap percolation.
Preprint.
arXiv:1107.1381

\bibitem{BaloghBMO}
\textsc{Balogh, J.}, \textsc{Bollob{\'a}s, B.},  \textsc{Morris, R.}
and  \textsc{Riordan, O.} (2011).
Linear algebra and bootstrap percolation.
Preprint.
arXiv:1107.1410

\bibitem{BaloghPeresPete}
\textsc{Balogh, J.},  \textsc{Peres, J.} and \textsc{Pete, G.} (2006).
Bootstrap percolation on infinite trees and non-amenable groups.
\textit{Combin. Probab. Comput.}  \textbf{15},  no. 5, 715--730.
\MR{2248323}

\bibitem{BaloghPete}
\textsc{Balogh, J.}  and \textsc{Pete, G.} (1998).
Random disease on the square grid.
\textit{Random Structures Algorithms} \textbf{13}, no. 3--4, 409--422.
\MR{1662792}

\bibitem{BaloghPittel}
\textsc{Balogh, J.} and \textsc{Pittel, B. G.} (2007).
Bootstrap percolation on the random regular graph.
\textit{Random Structures Algorithms} \textbf{30}, no. 1--2, 257--286.
\MR{2283230}

\bibitem{SJI}
\textsc{Barbour, A.D.},  \textsc{Holst, L.} and
\textsc{Janson, S.} (1992).
\textit{Poisson Approximation}.
Oxford Univ. Press, Oxford.

\bibitem{Bill}
\textsc{Billingsley, P} (1968).
\textit{Convergence of Probability Measures},
Wiley, New York.

\bibitem{Bo68}
\textsc{Bollob{\'a}s, B.} (1968).
Weakly $k$-saturated graphs. 
In \textit{Beitr\"age zur Graphentheorie (Kolloquium, Manebach, 1967)},
pp. 25--31,  Teubner, Leipzig, 1968.
\MR{0244077}

\bibitem[Bollob{\'a}s(2006)]{Boll:Art}
\textsc{Bollob{\'a}s, B.} (2006).
\textit{The Art of Mathematics. Coffee Time in Memphis.}
Cambridge University Press, New York.

\bibitem[Carroll(1876)]{snark}
\textsc{Carroll, L.} (1876).
\textit{The Hunting of the Snark}.
Macmillan, London.

\bibitem{CerfC}
 \textsc{Cerf R.} and   \textsc{Cirillo,  E.N.M.} (1999). Finite size scaling in three-dimensional
bootstrap percolation. \textit{Ann. Probab.}
\textbf{27}, 1837--1850.
\MR{1742890}

\bibitem{CerfManzo}
\textsc{Cerf R.} and  \textsc{Manzo, F.} (2002).
The threshold regime of finite volume bootstrap percolation.
\textit{Stochastic Process. Appl.} \textbf{101}, no. 1, 69--82.
\MR{1921442}

\bibitem{CerfManzo1}
 \textsc{Cerf R.} and  \textsc{Manzo, F.} (2010).
    A $d$-dimensional nucleation and growth model.
Preprint.
arXiv:1001.3990

\bibitem{CerfManzo2}
  \textsc{Cerf R.} and  \textsc{Manzo, F.} (2011).
Nucleation and growth for the Ising model in $d$  dimensions at very low
temperatures.
Preprint.
arXiv:1102.1741

\bibitem{ChalupaLR}
\textsc{Chalupa, J.}, \textsc{Leath, P. L.} and \textsc{Reich, G. R.} (1979).
Bootstrap percolation on a Bethe lattice.
\textit{J. Phys. C.} \textbf{12}, L31--L35.

\bibitem{D-CvE}
   \textsc{Duminil-Copin, H.} and \textsc{Van Enter, A. C. D. } (2010). Sharp metastability threshold
  for an anisotropic bootstrap percolation model.
Preprint.
 arXiv:1010.4691

\bibitem{FontesSS}
  \textsc{Fontes, L. R.}, \textsc{Schonmann, R. H.} and
  \textsc{Sidoravicius, V.} (2002).
  Stretched exponential fixation in stochastic Ising models at zero temperature.
\textit{Comm. Math. Phys.} \textbf{228}  \no3, 495--518.
\MR{1918786}

\bibitem{GravnerHM}
\textsc{Gravner, J.}, \textsc{Holroyd, A.E.} and \textsc{Morris, A.}  (2010).
A sharper threshold for
bootstrap percolation in two dimensions.
\PTRF\unskip, to appear.
arXiv:1002.3881v2

\bibitem{Gut}
\textsc{Gut, A.} (2005).
\textit{Probability: A Graduate Course}.
Springer, New York.

\bibitem{Holroyd}
\textsc{Holroyd, A.E.} (2003).
Sharp metastability threshold for two-dimensional bootstrap
percolation.
\PTRF \textbf{125}, no. 2, 195--224.
\MR{1961342}

\bibitem{HolroydLR}
\textsc{Holroyd, A.E.}, \textsc{Liggett,  T.M} and \textsc{Romik,
  D.} (2004).
Integrals, partitions, and cellular automata.
\textit{Trans. Amer. Math. Soc.} \textbf{356} (2004), 3349--3368.
\MR{2052953}

\bibitem[Janson(1994)]{SJII}
\textsc{Janson, S.}  (1994).
\textit{Orthogonal decompositions and functional limit theorems for
random graph statistics}.
Mem. Amer. Math. Soc., \vol{111}, no. 534,
Amer. Math. Soc., Providence, R.I.
\MR{1219708}

\bibitem{SJ215}
 \textsc{Janson, S.} (2009).
On percolation in random graphs with given vertex degrees.
\textit{Electronic J. Probab.} \textbf{14}, Paper 5, 86--118.
\MR{2471661}

\bibitem[Janson(2009)]{SJN6}
\textsc{Janson, S.} (2009).
Probability asymptotics: notes on notation.
\textit{Institute Mittag-Leffler Report} 12, 2009 spring.

\bibitem{JLR}
\textsc{Janson, S.}, \textsc{{\L}uczak, T.} and \textsc{Ruci{\' n}ski,
  A.} (2000).
\textit{Random Graphs}.
Wiley, New York.

\bibitem[Kallenberg(2002)]{Kallenberg}
\textsc{Kallenberg, O.} (2002).
\textit{Foundations of Modern Probability.}
2nd ed., Springer, New York.

\bibitem{Ko} \textsc{Kozma, R.},  \textsc{Puljic, M.}, \textsc{Balister, P.}
      \textsc{Bollob{\'a}s, B.} and
  \textsc{Freeman, W.} (2005).
  Phase transitions in the neuropercolation model of neural
  populations with mixed local and non-local interactions.
 \textit{Biol. Cybernet.} \textbf{92},  no. 6, 367--379.
\MR{2206497}

\bibitem[Martin-L\"of(1986)]{ML86}
\textsc{Martin-L\"of, A.} (1986).
Symmetric sampling procedures, general epidemic processes and their
threshold limit theorems.
\textit{J. Appl. Probab.}  \vol{23}  \no2,  265--282.
\MR{0839984}

\bibitem[Martin-L\"of(1998)]{ML98}
\textsc{Martin-L\"of, A.} (1998).
The final size of a nearly critical epidemic, and the first passage time of
a Wiener process to a parabolic barrier.
\textit{J. Appl. Probab.}  \vol{35} \no3 ,  671--682.
\MR{1659544}

\bibitem{Morris}
\textsc{Morris, R.} (2009).
Minimal percolating sets in bootstrap percolation.
\textit{Electron. J. Combin.}  \textbf{16},  no. 1, Research
Paper 2, 20 pp.
\MR{2475525}

\bibitem{Morris1}
\textsc{Morris, R.} (2011). Zero-temperature Glauber dynamics on $Z^d$.
\PTRF  \textbf{149}, 417--434.

\bibitem[Scalia-Tomba(1985)]{Scalia-Tomba}
\textsc{Scalia-Tomba, G.-P.} (1985).
Asymptotic final-size distribution for some chain-binomial processes.
\textit{Adv. Appl. Probab.}  \vol{17},  no. 3, 477--495.
\MR{0798872}

\bibitem{Schonmann}
\textsc{Schonmann, R.H.} (1992). On the behaviour of some cellular automata related to
bootstrap percolation. \textit{Ann. Probab.}  \vol{20}, 174--193.
\MR{1143417}

\bibitem[Sellke(1983)]{Sellke}
\textsc{Sellke, T.} (1983).
On the asymptotic distribution of the size of a stochastic epidemic.
\textit{J. Appl. Probab.}  \vol{20}  no. 2, 390--394.
\MR{0698541}

\bibitem{Tl}
  \textsc{Tlusty, T.} and \textsc{ Eckmann,  J.-P.} (2009). Remarks on bootstrap percolation
  in metric
  networks.  \textit{J. Phys. A: Math. Theor.} {\bf 42}, 205004.
\MR{2515591}

\bibitem{T}
\textsc{Turova, T.} (2011). The emergence of connectivity in neuronal networks: from
  bootstrap percolation to auto-associative memory. 
  \textit{Brain Research}, to appear.

\bibitem{TV}
\textsc{Turova, T.} and   \textsc{ Villa, A.} (2007).
On a phase diagram for random neural networks with embedded
                spike timing dependent plasticity.
\textit{BioSystems}
\textbf{89},  280--286.

\bibitem{Vallier:PhD}
\textsc{Vallier, T.} (2007).
\textit{Random graph models and their applications}.
Ph. D. thesis, Lund University.
\end{thebibliography}

\end{document}